\documentclass[11pt]{article}

\usepackage[T1]{fontenc}
\usepackage[utf8]{inputenc}
\usepackage{amsmath,amssymb,amsthm,mathtools}
\usepackage{libertinus-type1}
\usepackage{libertinust1math}
\usepackage[margin=1in]{geometry}
\usepackage{xcolor}
\usepackage{enumitem}
\usepackage{microtype}
\usepackage[colorlinks=true,linkcolor=blue!50!black,citecolor=blue!50!black,urlcolor=blue!50!black]{hyperref}
\hypersetup{
  pdftitle={Pointed evaluation fibers of rational curves on del Pezzo manifolds},
  pdfauthor={Ari Krishna},
  pdfsubject={Algebraic geometry},
  pdfkeywords={rational curves, del Pezzo manifolds, Kontsevich spaces, evaluation morphisms}
}

\usepackage{aliascnt}
\newtheorem{theorem}{Theorem}[section]
\newaliascnt{proposition}{theorem}
\newtheorem{proposition}[proposition]{Proposition}
\aliascntresetthe{proposition}
\newaliascnt{corollary}{theorem}

\aliascntresetthe{corollary}
\newaliascnt{lemma}{theorem}
\newtheorem{lemma}[lemma]{Lemma}
\aliascntresetthe{lemma}
\theoremstyle{definition}
\newaliascnt{definition}{theorem}

\aliascntresetthe{definition}
\newaliascnt{remark}{theorem}
\newtheorem{remark}[remark]{Remark}
\aliascntresetthe{remark}
\newaliascnt{example}{theorem}

\aliascntresetthe{example}
\newaliascnt{convention}{theorem}

\aliascntresetthe{convention}
\usepackage[nameinlink,noabbrev]{cleveref}
\crefname{theorem}{theorem}{theorems}
\Crefname{theorem}{Theorem}{Theorems}
\crefname{proposition}{proposition}{propositions}
\Crefname{proposition}{Proposition}{Propositions}
\crefname{corollary}{corollary}{corollaries}
\Crefname{corollary}{Corollary}{Corollaries}
\crefname{lemma}{lemma}{lemmas}
\Crefname{lemma}{Lemma}{Lemmas}
\crefname{definition}{definition}{definitions}
\Crefname{definition}{Definition}{Definitions}
\crefname{remark}{remark}{remarks}
\Crefname{remark}{Remark}{Remarks}
\crefname{example}{example}{examples}
\Crefname{example}{Example}{Examples}
\crefname{convention}{convention}{conventions}
\Crefname{convention}{Convention}{Conventions}

\newcommand{\Pj}{\mathbb P}

\newcommand{\OO}{\mathcal O}
\newcommand{\Mbar}{\overline M}
\newcommand{\ev}{\operatorname{ev}}
\newcommand{\Gr}{\operatorname{Gr}}

\newcommand{\Pic}{\operatorname{Pic}}

\title{Pointed evaluation fibers of rational curves on del Pezzo manifolds}

\author{Ari Krishna}

\begin{document}

\maketitle

\begin{abstract}
Let $X$ be a Picard-rank-one del Pezzo manifold of dimension $n\geq 4$ over an algebraically closed field of characteristic zero. Okamura proved that the unpointed Kontsevich spaces $\Mbar_{0,0}(X,d)$ are irreducible of the expected dimension for every $d\geq 1$. We refine this result by studying pointed evaluation fibers. First, we prove that for every $d\geq 1$, the one-pointed evaluation morphism $\Mbar_{0,1}(X,d)\to X$ has geometrically irreducible generic fiber. Second, in the very ample cases $H^n=3,4,5$, we prove that for every $d\geq 2$, the two-pointed evaluation morphism $\Mbar_{0,2}(X,d)\to X\times X$ has geometrically irreducible generic fiber. 
\end{abstract}

\medskip
\noindent\textbf{2020 Mathematics Subject Classification.} Primary 14J45; Secondary 14H10, 14N35.

\noindent\textbf{Keywords.} Rational curves; del Pezzo manifolds; Kontsevich spaces; evaluation morphisms.

\section{Introduction}

Let $X$ be a smooth projective variety and let $H$ be an ample divisor. The Kontsevich space $\Mbar_{0,r}(X,d)$ parametrizes genus-zero stable maps to $X$ of $H$-degree $d$ with $r$ marked points. It is of interest to ask whether the space $\Mbar_{0,0}(X,d)$ is irreducible. Beyond this, a pointed refinement asks for the geometry of the fibers of the evaluation maps
\[
  \ev_{d,r}:\Mbar_{0,r}(X,d)\longrightarrow X^r.
\]
For instance, irreducibility of the generic fiber of $\ev_{d,1}$ means that through a general point of $X$, all degree $d$ rational curves form one irreducible family. Irreducibility of the generic fiber of $\ev_{d,2}$ says that through two general points, all degree $d$ rational curves form one irreducible rational path space. The irreducibility and dimension of these spaces are closely tied to Geometric Manin's Conjecture \cite{LT19}.

In this paper, we consider del Pezzo manifolds of Picard rank one and dimension $n\geq 4$. Thus
\[
  -K_X=(n-1)H,
\]
where $H$ is the ample generator of $\Pic(X)$. Such varieties are classified by the degree $H^n$, with $1\leq H^n\leq 5$ \cite{IP99}. In the very ample cases, one has
\[
\begin{array}{c|c}
H^n & X \\
\hline
3 & \text{a smooth cubic hypersurface in }\Pj^{n+1},\\
4 & \text{a smooth complete intersection of two quadrics in }\Pj^{n+2},\\
5 & \text{a smooth linear section of }\Gr(2,5)\subset \Pj^9.
\end{array}
\]

Okamura proved the following unpointed theorem \cite{Oka24}.

\begin{theorem}[Okamura]\label{thm:okamura}
Let $X$ be a del Pezzo manifold of Picard rank one and dimension $n\geq 4$. Then for every $d\geq 1$, the Kontsevich space $\Mbar_{0,0}(X,d)$ is irreducible of the expected dimension
\[
  (n-1)d+n-3.
\]
\end{theorem}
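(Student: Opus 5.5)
This is Okamura's theorem, which the paper quotes from \cite{Oka24}. Were I to reprove it, I would follow the standard strategy for Fano varieties of low index (Harris--Roth--Starr, Lehmann--Tanimoto, Beheshti--Lehmann--Riedl--Tanimoto): a dimension count that rules out extra components, followed by an induction on $d$ via gluing and smoothing.

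\textbf{Dimension bound.} First I will show that every component of $\Mbar_{0,0}(X,d)$ has the expected dimension
\[
-K_X\cdot C+n-3=(n-1)d+n-3.
\]
The lower bound is automatic from deformation theory. For the upper bound I will use a bend-and-break argument through two general points, together with the Lehmann--Tanimoto analysis of the $a$-invariant. Since $X$ has Picard rank one and index $n-1\geq 3$, no subvariety $Y\subsetneq X$ can have $a(Y,H)>a(X,H)$ with a family of rational curves of unexpectedly large dimension. For del Pezzo manifolds the only adjoint-rigid candidates are linear subspaces and quadrics contained in $X$. One checks directly that the families of lines and conics they sweep out have at most the expected dimension. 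Consequently no component consists entirely of non-free curves, so a general member of every component is free.

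\textbf{Low degrees.} Next I will establish irreducibility for small $d$ by hand.
\begin{itemize}[leftmargin=*]
\item $d=1$: the Fano variety of lines is irreducible of dimension $2n-4$. In the very ample cases this is classical. For $H^n=1,2$ it follows from the double-cover and weighted-hypersurface descriptions.
\item $d=2$: conics through a general point correspond to lines in the variety of minimal rational tangents. That variety is a smooth complete intersection of type $(2,3)$, $(2,2,2)$, etc.\ in $\Pj^{n-1}$, and it is irreducible when $n\geq 4$.
\end{itemize}

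\textbf{Induction.} For the inductive step I will show that every component $M$ of $\Mbar_{0,0}(X,d)$ contains a stable map whose domain is a chain of a free line and a free curve of degree $d-1$. The main tool is Movable Bend-and-Break (Beheshti--Lehmann--Riedl--Tanimoto). Because $-K_X=(n-1)H$ with $n\geq 4$, a free curve of degree $\geq 2$ satisfies $-K_X\cdot C\geq 2(n-1)\geq n+2$. Fixing two general points then forces a one-parameter degeneration, and the family of curves through those points has dimension $\geq 1$. This breaks the curve into free components of lower degree.

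Gluing is controlled by two facts. By the induction hypothesis, the space of degree-$(d-1)$ curves is irreducible. The generic fiber of the evaluation map $\Mbar_{0,1}(X,1)\to X$ is irreducible (lines through a general point form a smooth irreducible variety when $n\geq 4$). Together these imply that the locus of such chains is irreducible. It lies in the smooth locus of the Kontsevich space, since free chains are unobstructed. Therefore it lies in a unique component, which must be $M$, and so irreducibility follows.

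\textbf{Main obstacle.} The hardest step is the bend-and-break step: guaranteeing that the breaking produces free components rather than curves trapped in the non-free locus (lines on a special sublocus, or cones). The case $H^n=1$ is especially delicate, because $H$ is not even base-point free there. I would handle this by first using the dimension estimate from the first step to show that non-free loci have too small a dimension to contain a whole degeneration. I would then argue case by case for $H^n=1,2$ using their explicit double-cover and weighted-hypersurface models.
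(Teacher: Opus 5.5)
The paper does not prove this theorem; it imports it from \cite{Oka24}. The inputs it records (\cref{thm:ok-line,thm:ok-dim,thm:ok-bb}) are exactly the ones your endgame combines, and that endgame is sound. By induction, free chains are unobstructed, $\ev$ is smooth on free lines, and line fibers are irreducible. So the free-chain locus is irreducible, sits in the smooth locus, and meets only one component. As a proof, however, the proposal has one wrong step and leaves the two substantive inputs unproved.

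\textbf{The $d=2$ step is wrong.} Conics through a general $x$ do not correspond to lines in the VMRT $\mathcal C_x\subset\Pj(T_xX)$. A line in $\mathcal C_x$ is a pencil of tangent directions of lines through $x$, and these sweep out a plane through $x$ inside $X$. The dimensions also disagree: conics through $x$ form a $(2n-4)$-dimensional family, whereas lines on a $(2,3)$ complete intersection in $\Pj^{n-1}$ have expected dimension $2n-11$. (Also, the VMRT for $H^n=4$ has type $(2,2)$, and for $H^n=5$ it is a linear section of $\Pj^1\times\Pj^2$.) Fortunately the step is unnecessary. For $d=2$, bend-and-break through general $p,q$ gives two lines, one through each point, since no line contains both. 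Each is free because it passes through a general point.

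\textbf{The real gap is $d\geq 3$.} Two-point bend-and-break yields some reducible member through $p,q$, but only the components through $p$ and $q$ are guaranteed free.
\begin{itemize}
\item Nothing you wrote excludes extra components: a middle component, or $p,q$ on one component with a tail attached at a special point. For example, the tail could be a line through a point of $S$, where lines form an $(n-2)$-dimensional family and the general one is not free.
\item Nothing forces a $(1,d-1)$ split either.
\end{itemize}
To produce a free chain in $M$, you must show that maps in $M$ with a non-free component form a locus of codimension $\geq 2$. Then a general point of some component of the boundary divisor of $M$ is a free chain. The same issue arises for components of $\Mbar_{0,0}(X,d)$ lying entirely in the boundary or made of multiple covers. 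All of this needs control of $\dim\ev_{k,1}^{-1}(x)$ at \emph{every} $x$, namely that it jumps by at most one over $S$; that is \cref{thm:ok-dim}.

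Your $a$-invariant paragraph does not supply this. It only concerns $\mathrm{Mor}(\Pj^1,X)$, i.e.\ irreducible domains. Even there, the relevant criterion is that no $Y\subsetneq X$ has $a(Y,H)>a(X,H)=n-1$, which already forces $\dim Y=n-1$. It is not a check of lines and conics on linear spaces and quadrics. What you call the main obstacle is precisely the content of \cref{thm:ok-dim,thm:ok-bb}. You name it but do not resolve it, so the proposal is an outline of Okamura's argument with those two results as black boxes.
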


Building on his work, we prove the following pointed refinements.

\begin{theorem}[One-pointed irreducibility]\label{thm:one-pointed-main}
Let $X$ be a del Pezzo manifold of Picard rank one and dimension $n\geq 4$. Then for every $d\geq 1$, the evaluation morphism
\[
  \ev_{d,1}:\Mbar_{0,1}(X,d)\to X
\]
has geometrically irreducible generic fiber. Consequently, for a general point $p\in X$, the fiber $\ev_{d,1}^{-1}(p)$ is irreducible of dimension
\[
  (n-1)d-2.
\]
\end{theorem}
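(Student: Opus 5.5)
Since $\Mbar_{0,1}(X,d)$ is the universal curve over $\Mbar_{0,0}(X,d)$, \cref{thm:okamura} makes it irreducible of dimension $(n-1)d+n-2$. Every component dominates $X$, because $X$ is homogeneous enough under deformation: free curves pass through a general point, and the generic curve of the unique component is free. The generic fiber therefore has dimension $(n-1)d-2$. Its irreducibility is then equivalent to the following: in the Stein factorization $\Mbar_{0,1}(X,d)\to Z\to X$, the finite part $Z\to X$ is birational. My plan is to prove this by showing that $X$ admits no nontrivial finite cover through which the family factors. This uses monodromy and simple connectivity plus a check on branch loci. Equivalently, it uses a degeneration argument by induction on $d$.

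\textbf{Base cases.} For $d=1$, the fiber over a general $p$ is the variety of lines through $p$. In the very ample cases, this is a smooth complete intersection in $\Pj^{n-1}$ of the expected type: of degrees $(2,3)$, $(1,2,1,2)$, or the corresponding $\Gr(2,5)$ section. It has dimension $n-3\ge1$ and is connected, hence irreducible, because $n\ge 4$. For $H^n=1,2$, I would use the known description of lines (or of $H$-degree-one curves) through a general point from Okamura's analysis, and I would check irreducibility there directly. For $d=2$, I would similarly use conics through $p$, which are controlled by lines through $p$ together with the lines meeting them.

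\textbf{Induction via gluing.} For $d\ge2$, I would take the boundary divisor $D_{1,d-1}\subset\Mbar_{0,1}(X,d)$ of maps from two components, with the marked point on the degree-$1$ component. Its fiber over $p$ is $\ev_{1,1}^{-1}(p)\times_X \Mbar_{0,1}(X,d-1)$, glued at a point $q$ of the line. Irreducibility of the lines through $p$ is known. For a general point $q$ on a general such line, the curves of degree $d-1$ through $q$ form an irreducible family by induction. Granting the requisite generality, this fiber product is irreducible of dimension $(n-3)+1+((n-1)(d-1)-2)=(n-1)d-3$. So it is a divisor in $\ev_{d,1}^{-1}(p)$ of the expected codimension.

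\textbf{Main obstacle.} The key claim is that every irreducible component of the general fiber $F_p=\ev_{d,1}^{-1}(p)$ contains a general point of this boundary piece. Then $F_p$ is connected through a locus along which it is smooth (unobstructed), which forces irreducibility. To prove the claim I would use the fact that $\Mbar_{0,1}(X,d)$ is irreducible with generic member a free curve. By bend-and-break for curves through a fixed general point (Mori), each component of $F_p$, being of dimension $(n-1)d-2\ge1$, contains reducible curves. The real difficulty is showing that these reducible curves can be taken to be of the type line${}\cup{}$(degree $d-1$ free curve) with both components general. Okamura's dimension counts on boundary strata would be used here: strata with non-free components or with more components have too small dimension to be the limits of a whole component's worth of degenerations. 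The degeneration can in addition be pushed further so that the $p$-component has degree $1$. Smoothness of $F_p$ at the glued point follows from $H^1(N)=0$ for a nodal free curve. So $F_p$ is locally irreducible there, and the components cannot meet along this divisor. Hence there is exactly one component.
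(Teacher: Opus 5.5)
Your route is different from the paper's, and as written it has a gap exactly at the step you call the main obstacle.

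\textbf{What is fine.} The preliminaries hold up:
the dimension of $\Mbar_{0,1}(X,d)$; the case $d=1$ via \cref{thm:ok-line}; irreducibility of the main part of $D_{1,d-1}(p)$, where the ``requisite generality'' holds because the cone of lines through a general $p$ is not contained in the bad locus for degree $d-1$, by a symmetric incidence count; and smoothness of $F_p$ at a general line joined to a free curve.

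\textbf{The gap.} The claim that every component $F'$ of $F_p$ contains a general point of $D_{1,d-1}(p)$ is asserted rather than proved, and your sketch does not give it.
(a) Bend-and-break does not follow from $\dim F'\geq 1$: the lines through a point of $\Pj^2$ form a complete one-dimensional family with no reducible member. You need a positive-dimensional subfamily through a second fixed point, i.e.\ $\dim F'\geq n$. This does hold here, since $(n-1)d-2\geq n$ for $d\geq 2$ and $n\geq 4$.
(b) A single reducible member of $F'$ is not a general point of a boundary divisor. You need three things:
\begin{enumerate}[label=\textup{(b\arabic*)}]
\item $\dim\partial F_p\leq (n-1)d-3$, so that $F'\not\subset\partial F_p$;
\item $F'\cap\partial F_p$ has pure codimension one in $F'$, which follows because the boundary is locally principal on the stack;
\item a count showing that the only $((n-1)d-3)$-dimensional pieces of $\partial F_p$ are the main components of the strata $D_{d_1,d_2}(p)$ with $d_1+d_2=d$. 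This is where Okamura's estimates, including those over $S$, really enter.
\end{enumerate}
(c) This only puts $F'$ through some $D_{d_1,d_2}(p)$, not necessarily $D_{1,d-1}(p)$, and different components could a priori pick different splittings. ``Pushing the degeneration further'' must be made precise. For instance, show that the closures of both $D_{d_1,d_2}(p)$ and $D_{1,d-1}(p)$ contain a general three-component chain of free curves, using the inductive irreducibility of $F_{d_1}(p)$ and $F_{d-1}(q)$. Since $F_p$ is smooth at such a chain, the component containing one stratum contains the other.
With (a)--(c) supplied, this Harris--Roth--Starr-style argument works, but none of it is carried out. Simple connectivity of $X$ plays no role, since a nontrivial Stein factor would be branched.

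\textbf{The missing idea.} What makes all of this unnecessary is already inside your Stein reduction. Since $\Mbar_{0,1}(X,d)$ is irreducible, it suffices to find one integral family $T\to\Mbar_{0,1}(X,d)$ over $X$ whose map to $X$ is dominant with geometrically irreducible generic fiber. Such a $T$ dominates the finite Stein factor $Y_d$, so $k(Y_d)\subset k(T)$. Because $k(X)$ is algebraically closed in $k(T)$, the Stein factor is trivial (\cref{lem:stein-probe}).

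The paper takes $T=\Theta_d$, the chains of $d$ lines with all junction points in the open set $U$ where line fibers are irreducible. As an iterated fiber product of the two-pointed line space over $U$, its fibers over the starting point are irreducible by a trivial induction (\cref{lem:theta-fibers}). This uses only \cref{thm:ok-line}: no bend-and-break, no boundary dimension counts, no smoothness of $F_p$, and no induction over all splittings.

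\textbf{Comparison.} Your argument, once completed, would not use \cref{thm:ok-unpointed} at all, and would essentially reprove it from the line and dimension theorems. The paper's proof is much shorter precisely because it uses that theorem.
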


\begin{theorem}[Two-pointed irreducibility]\label{thm:two-pointed-main}
Let $X$ be a del Pezzo manifold of Picard rank one and dimension $n\geq 4$, and assume $H^n\in\{3,4,5\}$. Then for every $d\geq 2$, the evaluation morphism
\[
  \ev_{d,2}:\Mbar_{0,2}(X,d)\to X\times X
\]
has geometrically irreducible generic fiber. Consequently, for two general points $p,q\in X$, the fiber
\[
  F_{d,2}(p,q):=\ev_{d,2}^{-1}(p,q)
\]
is irreducible of dimension
\[
  (n-1)(d-2)+n-3.
\]
\end{theorem}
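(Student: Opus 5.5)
We prove \Cref{thm:two-pointed-main} by induction on $d$. The base case is $d=2$, and we pass from $d-1$ to $d$ by a smoothing argument on reducible curves.

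\textbf{Reduction to the main component.} By \Cref{thm:okamura}, $\Mbar_{0,2}(X,d)$ is irreducible of dimension $(n-1)d+n-1$. Its general point is a free curve, since a general member of the unique component sweeps out $X$. So $\ev_{d,2}$ is dominant whenever the expected fiber dimension $(n-1)(d-2)+n-3$ is nonnegative, which holds for $d\ge2$. Take the Stein factorization
\[
\Mbar_{0,2}(X,d)\to Z_d\to X\times X.
\]
Geometric irreducibility of the generic fiber is equivalent to $Z_d\to X\times X$ being birational. Equivalently, the monodromy acts transitively on the irreducible components of $F_{d,2}(p,q)$, and every component of $F_{d,2}(p,q)$ meets the smooth, main locus. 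The second condition follows from a dimension count: for general $(p,q)$, each component has dimension at least the expected one, and Okamura's theorem shows no component lies entirely in the boundary or in a non-free locus. So the task is to show that $F_{d,2}(p,q)$ is connected through its smooth points. By Zariski's main theorem applied to $Z_d$, it is enough to exhibit one point of $F_{d,2}(p,q)$ at which the fiber is smooth and through which every component must pass.

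\textbf{Base case $d=2$.} Here I would argue concretely with the very ample embedding $X\subset\Pj^N$ of degree $3,4,5$. A conic through $p,q$ lies in a plane containing the line $\overline{pq}$. For the cubic, conics through $p,q$ correspond to planes $\Pi\supset\overline{pq}$ with $\Pi\cap X$ equal to a line plus a conic. This is cut out in the $\Pj^{n-1}$ of such planes by a condition whose solution set is irreducible; in fact it is birational to the family of lines through the third intersection point of $\overline{pq}$ with $X$, and that family is irreducible for $n\ge4$. For $H^n=4$ and $H^n=5$, I would identify conics through $p,q$ with points of an explicit variety: a linear section of a quadric pencil, respectively a linear section of a Schubert-type variety in $\Gr(2,5)$. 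Irreducibility of these sections follows from Bertini. In each case the dimension is $n-3$, as required.

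\textbf{Induction step.} Assume the result holds for $d-1\ge2$. Consider the boundary locus $B\subset F_{d,2}(p,q)$ consisting of maps $C_1\cup C_2$ with $p\in C_1$ of degree $d-1$, $q\in C_2$ a line, and the two components meeting at a point $x$. Fibering over $x$, the space $B$ is described by choosing a point of $F_{d-1,2}(p,x)$ and a line through $x$ and $q$. As $q$ varies, $x$ ranges over the cone of lines through $q$, which is irreducible by \Cref{thm:one-pointed-main} with $d=1$. Over general $x$ in that cone, the fibers $F_{d-1,2}(p,x)$ are irreducible by induction. Hence $B$ is irreducible of dimension $(n-1)(d-3)+n-3+(n-3)$, which is one less than the fiber dimension. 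Its general point is a nodal union of free curves, hence a smooth point of $\Mbar_{0,2}(X,d)$ that smooths out of the boundary.

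It remains to show that every component of $F_{d,2}(p,q)$ degenerates into $B$. I would use bend-and-break: fix $p$, $q$, and a third general point on the curve, and degenerate within the fiber. This reaches a reducible curve, and the low-degree geometry, in which lines through a general point are controlled by \Cref{thm:one-pointed-main}, lets us push any reducible configuration into $B$. \textbf{This is the main obstacle.} I must rule out components of $F_{d,2}(p,q)$ whose degenerations consist only of configurations with non-free components, or where $p$ and $q$ end up on the same degree-$(d-1)$ component. I would first try to handle these cases by dimension counting with Okamura's bounds on the loci of non-free curves, using $n\ge4$ to get codimension at least two.
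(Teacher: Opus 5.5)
Your proposal has a genuine gap in the induction step, and it is exactly the step you flag as the main obstacle. Your argument requires every irreducible component of $F_{d,2}(p,q)$ to contain a general point of the boundary locus $B$, and nothing in the proposal establishes this. Bend-and-break with $p$, $q$ and a third point fixed produces \emph{some} reducible map in the closure of a given component, but it gives no control over which stratum you reach: $p$ and $q$ may land on the same component, the degree may split as $(d_1,d_2)$ with both $d_i\ge 2$, or components may be non-free or multiple covers. Showing that any such configuration can be moved into $B$ within the same component is the whole difficulty, and Okamura's dimension bounds do not obviously give it.

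Your reduction paragraph does not supply this either. Transitivity of monodromy on the components of $F_{d,2}(p,q)$ is automatic from irreducibility of $\Mbar_{0,2}(X,d)$ and does not bound their number. The criterion ``one smooth point of the fiber through which every component passes'' is far stronger than necessary.

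There are also smaller issues. Freeness of the general curve does not make $\ev_{d,2}$ dominant: general lines are free, yet no line joins two general points (\cref{lem:no-line-through-two-points}). Dominance must come from the base case plus gluing. Your formula for $\dim B$ omits the $+1$ for moving $x$ along the line through $q$. In the base case for $H^n=4,5$, the appeal to Bertini is unjustified, because the relevant sections are special (determined by $p,q$). The paper instead identifies the conic fiber exactly: as the smooth $Q^{n-3}$ of planes through $\overline{pq}$ in the unique quadric of the pencil containing $\overline{pq}$, and as the $\Pj^{n-3}$ of planes through $\overline{pq}$ inside $\Lambda\cap\Pj(\wedge^2(A\oplus B))$, respectively.

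The missing idea is that you never need to reach every component. Since $\Mbar_{0,2}(X,d)$ is irreducible (\cref{lem:marked-irreducible}), its finite Stein factor $Z_d$ is irreducible. Let $T\to\Mbar_{0,2}(X,d)$ be any family over $X\times X$ whose generic fiber is geometrically irreducible. Then $T\to Z_d$ is dominant, because its image dominates $X\times X$ and $Z_d$ is irreducible and finite over it. So $k(Z_d)$ embeds over $k(X\times X)$ into $k(T)$, in which $k(X\times X)$ is algebraically closed; hence $Z_d\to X\times X$ is birational. This is \cref{lem:stein-probe}. Geometrically, the connected set $T_{(p,q)}$ picks out one connected component of the fiber, algebraically in $(p,q)$, and this is incompatible with transitive monodromy unless there is only one component.

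The paper's probe is a conic through $p,q$ with a third marked point, glued to a one-pointed tail of degree $d-2$. Its fibers are irreducible by \cref{thm:conic-base} (irreducibility of the universal conic) and \cref{thm:one-pointed-main}. Your own $B$ also works as a probe, restricted to those $x$ on lines through $q$ for which $(p,x)$ lies in the open set where $F_{d-1,2}(p,x)$ is irreducible by induction. With that, your induction closes without any degeneration argument.

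Finally, birationality of the Stein factor gives connected general fibers; irreducibility additionally needs normality along the general fiber. You can get this by passing to the normalization. The probe lifts there, because its general point is a nodal union of free curves and hence an unobstructed point of the moduli space.
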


The base case is the following computation.

\begin{theorem}[Conics through two general points]\label{thm:conic-base}
Assume $H^n\in\{3,4,5\}$ and let $p,q\in X$ be general. Then $F_{2,2}(p,q)$ is geometrically irreducible of dimension $n-3$. More precisely:
\[
F_{2,2}(p,q)\cong
\begin{cases}
\ev_{1,1}^{-1}(r), & H^n=3,\text{ where } \overline{pq}\cap X=p+q+r,\\
Q^{n-3}, & H^n=4,\\
\Pj^{n-3}, & H^n=5.
\end{cases}
\]
Here, $Q^{n-3}$ denotes a smooth quadric hypersurface in $\Pj^{n-2}$. In each case, the universal conic over $F_{2,2}(p,q)$ is geometrically irreducible.
\end{theorem}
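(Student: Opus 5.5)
The approach is to reduce conics through $p$ and $q$ to conics lying in the \emph{plane} they span. Since $X$ is cut out by quadrics (degree $4,5$) or is a cubic (degree $3$), a conic $C\ni p,q$ on $X$ spans a plane $\Pi\supset\overline{pq}$. For general $p,q$ the line $\overline{pq}$ is not contained in $X$. So I would parametrize $F_{2,2}(p,q)$ by the planes $\Pi$ containing $\overline{pq}$ for which $\Pi\cap X$ contains a conic through $p,q$; the planes through a fixed line form a $\Pj^{N-2}$, where $X\subset\Pj^N$. Generality of $p,q$ lets me assume that every such conic is smooth and that every stable map in the fiber is an embedding, with the marked points landing at distinct points. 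The reason is that reducible conics through $p,q$ would need lines through $p$ and through $q$ that meet, and a dimension count using $\dim\ev_{1,1}^{-1}(p)=n-3$ (Theorem~\ref{thm:one-pointed-main} with $d=1$) shows this fails generically. Double covers of lines through both points are also impossible. Hence $F_{2,2}(p,q)$ is isomorphic to the scheme of such planes.

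Next I handle the three cases.

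\emph{Case $H^n=3$.} For a plane $\Pi\supset\overline{pq}$ we have $\Pi\cap X=C\cup L$, with $C$ a conic through $p,q$ and $L$ a line, and $L\cap\overline{pq}$ must be the residual point $r$. So $L$ is a line through $r$. Conversely, any line $L\ni r$ on $X$ spans, together with $\overline{pq}$, a plane whose residual conic passes through $p$ and $q$. This gives $F_{2,2}(p,q)\cong \ev_{1,1}^{-1}(r)$, provided $r$ is general enough (and one checks that $L=\overline{pq}$ does not occur). Irreducibility of this fiber then follows from Theorem~\ref{thm:one-pointed-main} for $d=1$; the point to verify is that $r$ is general when $p,q$ are. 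That holds because the map $(p,q)\mapsto r$ is dominant onto $X$.

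\emph{Case $H^n=4$.} Write $X=Q_1\cap Q_2$. A plane $\Pi\supset\overline{pq}$ meets $X$ in a conic exactly when the pencil $\lambda Q_1+\mu Q_2$ contains a member containing $\Pi$ and another member cutting the conic. Since $p,q\in X$, the restrictions $Q_i|_{\overline{pq}}$ both vanish at $p,q$. So some member $Q_\lambda$ contains $\overline{pq}$, and this member is unique because the line is general. The conics are then the planes through $\overline{pq}$ lying in $Q_\lambda$, each intersected with the remaining quadric. Planes in a smooth quadric $Q_\lambda\subset\Pj^{n+2}$ containing a fixed line $\ell$ correspond to lines in the smooth quadric $Q^{n-3}\subset \ell^{\perp}/\ell\cong\Pj^{n-2}$. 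Here I need $Q_\lambda$ to be smooth along $\ell$ for general $p,q$; this should follow from a dimension count on the finitely many singular members of the pencil.

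\emph{Case $H^n=5$.} I would use the Grassmannian geometry: two general points of $\Gr(2,5)$ correspond to $2$-planes $U_p,U_q\subset V_5$ with $U_p\cap U_q=0$. Conics through them lie in the $\Gr(2,4)$ with $V_4=U_p\oplus U_q$ and have the form $\{U: U\cap U_p\ne0,\ U\cap U_q\ne0\}$ restricted to a plane. Such a conic is determined by an isomorphism class, i.e.\ a $\Pj^1$-family of lines. A cleaner route is to note that conics through $p,q$ in $\Gr(2,5)$ correspond to $\Pj(U_p)\times\Pj(U_q)$-type data. Cutting by the $6-n$ hyperplanes defining $X$ should give a linear section, hence $\Pj^{n-3}$. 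I expect this to be the main obstacle: making the linear algebra match dimensions exactly and proving that the linear conditions are independent for general $p,q$.

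Finally, the universal conic over $F_{2,2}(p,q)$ is a $\Pj^1$-bundle over an irreducible base, since all members are smooth conics, so it is irreducible.
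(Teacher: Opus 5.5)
\textbf{Gap 1: reducible conics do occur.} Your claim that, for general $p,q$, every conic through them is smooth is false for $n\geq 4$, and the dimension count you cite points the other way. Lines through a general point sweep out a cone of dimension $(n-3)+1=n-2$. So the cones at $p$ and $q$ meet in the $n$-dimensional $X$ in expected dimension $n-4\geq 0$. Equivalently, chains $p\in\ell_1$, $\ell_1\cap\ell_2\neq\emptyset$, $q\in\ell_2$ form a family of dimension $(2n-4)+1+(n-3)+2=3n-4$ over the $2n$-dimensional $X\times X$.

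These chains really exist. On a cubic fourfold, the cone at $p$ is cut out on $X$ by $T_pX$ and a quadric, so it has class $2h^2$. The two cones therefore meet in a $0$-cycle of class $4h^4$, of degree $12$: twelve reducible conics (with multiplicity) join two general points.

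So $F_{2,2}(p,q)$ contains line pairs, and your last step (``the universal conic is a $\Pj^1$-bundle since all members are smooth'') fails as written. The plane parametrization survives, since a chain of two distinct lines still spans a plane through $\overline{pq}$. But irreducibility of the universal conic must be argued as in the paper. The reducible conics form a proper closed discriminant in the irreducible parameter space, so the general member is integral. The universal curve over the discriminant has dimension at most $(n-4)+1<n-2$ (or use flatness of the universal curve), so it cannot form an extra component.

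\textbf{Gap 2: the case $H^n=5$.} Your cubic and two-quadric cases follow the paper, but the case $H^n=5$ is not yet a proof, and your proposed parameter space is wrong. If $U\neq U_p,U_q$ lies on a smooth conic through $[U_p],[U_q]$, then $U\cap U_p=U\cap U_q=0$. Indeed, the lines $\Pj(U)$ form one ruling of a smooth quadric surface containing the skew lines $\Pj(U_p),\Pj(U_q)$. The locus $\{U: U\cap U_p\neq 0\neq U\cap U_q\}\cong\Pj(U_p)\times\Pj(U_q)$ is exactly the set of middle points $a\oplus b$ of the \emph{reducible} conics. Cutting it by $6-n$ hyperplanes gives the $(n-4)$-dimensional reducible locus (again refuting the smoothness claim), not $\Pj^{n-3}$.

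Three things are missing.
\begin{enumerate}
\item A proof that every degree-two map through $[U_p],[U_q]$ lands in $\Gr(2,W)$, where $W=U_p\oplus U_q$. The paper uses the splitting type of the tautological subbundle: $\OO\oplus\OO(-2)$ forces a common line, and $\OO(-1)^{\oplus 2}$ spans at most a $4$-space. Chains are checked directly.
\item The correct parametrization, which your own plane framework supplies. $\Gr(2,W)$ is a quadric in $\Pj(\wedge^2W)\cong\Pj^5$ not containing the chord, so conics through $p,q$ are the $\Pj^3$ of planes through the chord. Lying on $X$ is then the linear condition $\Pi\subset\Lambda$.
\item The independence of these linear conditions, which you flag as the main obstacle. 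The paper deduces it from smoothness of $X$: every nonzero form in $\Lambda^\perp$ has rank $4$, since rank-$2$ forms make up the dual variety of $\Gr(2,5)$. A rank-$4$ form on $V_5$ has $3$-dimensional maximal isotropic subspaces, so it cannot vanish on $\wedge^2W$. Hence $\Lambda\cap\Pj(\wedge^2W)\cong\Pj^{n-1}$, and the planes in it through the chord form $\Pj^{n-3}$. Alternatively, for general $p,q$, Okamura's dimension count forces the general fiber of $\ev_{2,2}$ to have dimension $\dim\Mbar_{0,2}(X,2)-2n=n-3$, which also gives independence.
\end{enumerate}

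\textbf{A smaller point for $H^n=4$.} Smoothness of $Q_\lambda$ along the chord is not enough. A cone with vertex off the chord gives a quadric of rank $n-2$ in $\ell^\perp/\ell$, which is a line pair when $n=4$. You need $Q_\lambda$ smooth, and your dimension count against the finitely many singular members of the pencil does give this.
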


Once \cref{thm:conic-base} is known, \cref{thm:two-pointed-main} follows by the same Stein factorization mechanism as \cref{thm:one-pointed-main}. The test object through two points is a conic joined to a one-pointed tail of degree $(d-2)$. 
The universal conic over $F_{2,2}(p,q)$ is irreducible by \cref{thm:conic-base}, and the tail fibers are irreducible by \cref{thm:one-pointed-main}. Thus, the conic-plus-tail probe has geometrically irreducible generic fiber over $X\times X$, and kills the finite Stein factor of $\ev_{d,2}$.

\section{Preliminaries and Okamura's results}

Throughout, we work over an algebraically closed field $k$ of characteristic zero. All varieties are projective unless otherwise stated. We write $\Mbar_{0,r}(X,d)$ for the coarse Kontsevich moduli space of genus-zero stable maps to $X$ of $H$-degree $d$ with $r$ marked points \cite{FP97}, and we equip it with the reduced structure when necessary; the corresponding statements may equivalently be formulated on the Kontsevich stack.

Let $X$ be a del Pezzo manifold of Picard rank one and dimension $n\geq 4$. Let $H$ be the ample generator of $\Pic(X)$, so
\[
  -K_X=(n-1)H.
\]

We use the following results of Okamura \cite{Oka24}.

\begin{theorem}[Line-fiber theorem]\label{thm:ok-line}
Let
\[
  \ev_{1,1}:\Mbar_{0,1}(X,1)\to X
\]
be the evaluation map for lines. Then a general fiber of $\ev_{1,1}$ is irreducible. Moreover, there is a finite subset $S\subset X$ such that
\[
  \dim \ev_{1,1}^{-1}(p)=n-3\quad \text{for }p\notin S,
\]
and
\[
  \dim \ev_{1,1}^{-1}(p)\leq n-2\quad \text{for }p\in S.
\]
\end{theorem}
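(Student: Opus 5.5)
This is Okamura's result, which the paper imports from \cite{Oka24}. I would still reconstruct it along the following lines, treating the irreducibility claim and the dimension claim separately.

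\emph{Dimension count.} Each component of $\Mbar_{0,0}(X,1)$ has dimension at least the expected value $-K_X\cdot\ell+n-3=2n-4$. By the unpointed theorem (\cref{thm:okamura} with $d=1$) the family of lines has dimension exactly $2n-4$, so $\Mbar_{0,1}(X,1)$ has dimension $2n-3$. Since lines cover $X$, the general fiber of $\ev_{1,1}$ has dimension $n-3$. Deformation theory sharpens this pointwise. For a line $\ell\ni p$, write the normal bundle as
\[
N_{\ell/X}\cong\bigoplus_i\OO(a_i),\qquad \sum_i a_i=n-3,\quad a_i\ge -1.
\]
The tangent space to $\ev_{1,1}^{-1}(p)$ at $[\ell,p]$ is then $H^0(N_{\ell/X}(-1))$, which has dimension equal to the number of indices with $a_i\ge 0$ counted with weight $a_i$.

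\emph{Bounding the jumping locus.} A fiber dimension of at least $n-2$ at $p$ forces every line through $p$ to have a non-free normal bundle, i.e.\ some $a_i=-1$. I would bound the set of such points in two steps.
\begin{enumerate}[label=(\roman*)]
\item The locus $Z\subset\Mbar_{0,1}(X,1)$ where the fiber dimension is at least $n-2$ has image $\ev(Z)$ with $\dim\ev(Z)\le\dim Z-(n-2)\le 2n-3-(n-2)=n-1$.
\item If $\ev(Z)$ contained a curve $C$, the lines through points of $C$ would sweep out a cone-like subvariety of dimension at least $n-1$. This is a divisor, and by Picard rank one it lies in $|mH|$ for some $m$. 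I would compare the degrees, or use the fact that $X$ is not covered by a positive-dimensional family of lines through a fixed point in too large a dimension, to contradict the bound $\dim\le 2n-4$ on the family of lines meeting $C$.
\end{enumerate}
This forces $\ev(Z)$ to be finite, which gives the set $S$.

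\emph{Fiber dimension at $p\in S$.} For the bound $\dim\le n-2$, note that lines through $p$ lie in $T_pX\cap X$. In the very ample cases they form a subvariety of the $\Pj^{n-1}$ of tangent directions cut out by at least one nontrivial equation: for a cubic, the second fundamental form and the cubic term. Hence this family has dimension at most $n-2$. In the non-very-ample cases ($H^n=1,2$) I would use the double-cover or weighted-hypersurface model and run the analogous computation.

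\emph{Irreducibility.} I would Stein-factor $\ev_{1,1}$ and argue that the finite part is birational, using either of two routes.
\begin{enumerate}[label=(\roman*)]
\item Through a general $p$ the variety of lines is a smooth complete intersection in $\Pj^{n-1}$ of dimension $n-3\ge 1$. For example, for a cubic it is cut out by equations of degrees $1,2,3$ in $\Pj^{n-1}$. Such a complete intersection is connected for $n\ge 4$ and smooth, hence irreducible.
\item Monodromy acts transitively on the components of the fiber, and since $\Mbar_{0,0}(X,1)$ is irreducible, any splitting would produce a finite étale cover of an open subset of $X$. I would rule this out using simple connectedness of a rationally connected $X$, after first controlling the codimension of the branch locus.
\end{enumerate}

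\emph{Main obstacle.} I expect the finiteness of $S$ to be the hardest step, especially producing a uniform argument that also covers the degree $1$ and degree $2$ del Pezzo manifolds.
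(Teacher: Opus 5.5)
The paper does not prove this statement; it quotes it from Okamura, so your reconstruction has to stand on its own. Several parts are reasonable sketches:
\begin{itemize}
\item the expected-dimension count;
\item the identification $T_{[\ell,p]}\ev_{1,1}^{-1}(p)=H^0(N_{\ell/X}(-1))$, so that a jump to $n-2$ forces an $\OO(-1)$ summand for the lines in an $(n-2)$-dimensional component of the fiber (not for every line through $p$, as you wrote);
\item irreducibility in the very ample cases, via connectedness of a smooth positive-dimensional variety of lines through a general point.
\end{itemize}
The gap is the finiteness of $S$, which you yourself flag as the crux. Step (ii) cannot work as written. If $S$ contained a curve $C$, the lines meeting $C$ form a family of dimension $\ge n-1$, which is perfectly compatible with the bound $2n-4$ once $n\ge 3$, so there is nothing to contradict. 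Nor does placing the union of these lines in some $|mH|$ give a degree contradiction; that union need not even be a divisor, since it may be all of $X$. Dimension counts on the space of lines alone cannot exclude a one-parameter family of jumping points; you need a genuinely geometric input.

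Here is what such an input looks like for the cubic. Write $F=x_0^2x_{n+1}+x_0Q+C$ with $p=[1:0:\cdots:0]$, and let $q,c$ be the restrictions of $Q,C$ to $x_{n+1}=0$. Lines through $p$ are then $\{q=c=0\}\subset\Pj(T_pX)\simeq\Pj^{n-1}$, cut out by equations of degrees $2,3$ (not $1,2,3$). A jump means $q$ and $c$ share a factor. If $q\not\equiv 0$, either there is a common linear factor, or $q\mid c$, in which case the tangent hyperplane section is $\{q\cdot(x_0+m)=0\}$. Both cases produce a $\Pj^{n-1}\subset X$, which is impossible because every divisor on $X$ has degree divisible by $3$. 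Hence $p\in S$ iff the second fundamental form vanishes at $p$, i.e.\ iff the differential of the Gauss map $\gamma:X\to\check{\Pj}^{n+1}$ vanishes at $p$. In characteristic zero, a curve in $S$ would therefore be contracted by $\gamma$, contradicting the finiteness of $\gamma$ (note $\gamma^*\OO(1)=\OO_X(2)$ is ample).

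For $H^n=4$ the same analysis gives $S=\emptyset$. A jump would either make a member of the pencil vanish on the $(n+1)$-dimensional cone over the embedded tangent space, forcing rank $\le 4$ although every member has corank $\le 1$, or put a $\Pj^{n-1}$ in $X$.

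An argument of this kind, or a uniform bound of $n-2$ on the dimension of the family of non-free lines, is what is missing; for $H^n=1,2$ you supply nothing at all. Two smaller points:
\begin{itemize}
\item Smoothness of the variety of lines through a general point needs the fact that all lines through a general point are free, together with generic smoothness.
\item Your monodromy route presupposes exactly the codimension-two control of the branch locus that it would have to establish.
\end{itemize}
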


\begin{theorem}[Okamura's dimension estimate]\label{thm:ok-dim}
Let
\[
  \ev_{d,1}:\Mbar_{0,1}(X,d)\to X
\]
be the one-pointed evaluation map. With $S\subset X$ as in \cref{thm:ok-line}, for every $d\geq 1$ one has
\[
  \dim \ev_{d,1}^{-1}(p)=(n-1)d-2\quad \text{for }p\notin S,
\]
and
\[
  \dim \ev_{d,1}^{-1}(p)\leq (n-1)d-1\quad \text{for }p\in S.
\]
Furthermore, any component of $\Mbar_{0,0}(X,d)$ generically parametrizes free curves and has expected dimension $(n-1)d+n-3$.
\end{theorem}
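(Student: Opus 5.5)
The plan is induction on $d$, bounding the dimension of a component of $\ev_{d,1}^{-1}(p)$ by the dimension of its boundary. The input is Mori's bend-and-break together with the line estimates of \cref{thm:ok-line}. The lower bound is standard deformation theory. Every component of $\Mbar_{0,1}(X,d)$ has dimension at least $-K_X\cdot C+n-3+1=(n-1)d+n-2$. Hence every nonempty fiber of $\ev_{d,1}$ has every component of dimension at least $(n-1)d-2$.

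Nonemptiness over every $p$ comes from two facts. Lines pass through every point of $X$, since $X$ is covered by lines by \cref{thm:ok-line}. Chains of lines through $p$ then give stable maps of every degree $d$ through $p$.

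For the upper bound, let $Z$ be an irreducible component of $\ev_{d,1}^{-1}(p)$ of dimension $D$. I claim that $Z\cap\partial$ has dimension at least $D-1$, where $\partial$ is the boundary of reducible domains. If not, some complete curve $B\subset Z$ misses $\partial$. Then $B$ is a one-parameter family of maps from irreducible $\Pj^1$'s all passing through the fixed point $p$.

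\begin{itemize}[label=--]
\item If the image curves vary, Mori's bend-and-break (rigidity lemma version) forces a degeneration to a reducible curve, which is a contradiction.
\item If the image is a fixed curve $\Gamma$, then $B$ consists of covers of the normalization of $\Gamma$ with a marked point over $p$. The interior of that family is affine modulo the finite automorphism data, so it cannot contain a complete curve. Alternatively, one can check directly that such covers degenerate into the boundary.
\end{itemize}

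I expect this step to be the main technical obstacle. It needs a careful treatment of multiple covers and of the coarse-space and stack subtleties, and I would try to quote a form of bend-and-break valid for pointed stable maps.

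Next I bound the boundary strata. A general point of a component of $Z\cap\partial$ lies in a stratum $D(d_1,d_2)$ with $d_1+d_2=d$. Such a point is given by a degree-$d_1$ piece through $p$, a node point $x$ on it, and a degree-$d_2$ piece through $x$. Deeper strata are handled the same way, or simply absorbed into these. By induction, the first piece contributes at most $(n-1)d_1-2$ when $p\notin S$, and the node point contributes $1$. For the second piece, a general $x$ contributes $(n-1)d_2-2$ by induction (for $d_2=1$ this is \cref{thm:ok-line}). If $x\in S$, we pay one more for the fiber but lose one for the choice of $x$, since $S$ is finite. Altogether
\[
\dim D(d_1,d_2)\cap \ev_{d,1}^{-1}(p)\le (n-1)d_1-2+1+(n-1)d_2-2=(n-1)d-3 .
\]
Hence $D-1\le (n-1)d-3$, that is, $D\le (n-1)d-2$. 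Combined with the lower bound, this gives equality for $p\notin S$. For $p\in S$, the first factor gains $1$ by induction, which yields $D\le (n-1)d-1$. The base case $d=1$ is \cref{thm:ok-line}.

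Finally, let $M$ be a component of $\Mbar_{0,0}(X,d)$. Its dimension is at least $(n-1)d+n-3$. If its general member were not free, the curves it parametrizes would sweep out a proper subvariety $Y\subsetneq X$. Then over a general $y\in Y$ with $y\notin S$, the one-pointed fiber would have dimension at least $(n-1)d+n-2-\dim Y>(n-1)d-2$, contradicting the estimate above. So the general member is free. Freeness makes the space smooth of the expected dimension at a general point, so $\dim M=(n-1)d+n-3$.
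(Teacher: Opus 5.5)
The paper does not prove this statement; it imports it from Okamura. Your scheme (expected-dimension lower bound, then an upper bound by pushing a component $Z$ of $\ev_{d,1}^{-1}(p)$ into the boundary and counting strata inductively) is the standard Harris--Roth--Starr one, and the stratum count itself is fine. The gap is the bend-and-break step.

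\textbf{The one-fixed-point step is false.} With only \emph{one} fixed point, a complete curve of maps with irreducible domain and varying image need not break.
\begin{itemize}
\item For $d=1$: $\ev_{1,1}^{-1}(p)$ is a complete family of lines through $p$, of dimension $n-3\geq 1$, with no boundary at all.
\item For $d=2$: take a complete curve $\Gamma\subset\ev_{1,1}^{-1}(p)$ and $A\in|H|$ with $p\notin A$. Send $\ell\in\Gamma$ to the double cover $\Pj^1\to\ell$ branched at $p$ and at the point $\ell\cap A$, marked at the ramification point over $p$. After a finite base change this is a complete curve in $\ev_{2,1}^{-1}(p)$ with varying images, disjoint from $\partial$.
\end{itemize}
The structural reason is as follows. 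A complete curve in $\Mbar_{0,1}(X,d)$ only yields a possibly nontrivial $\Pj^1$-bundle $S\to B$ with one contracted section; for lines through $p$, $S$ is the blow-up of a cone at its vertex. So the rigidity lemma, which needs a product $\Pj^1\times B$, does not apply. A single contracted section of negative self-intersection is no contradiction. Mori's argument needs \emph{two} disjoint contracted sections, whose self-intersections are opposite and so cannot both be negative. Your argument uses nothing about $B$ beyond completeness and avoiding $\partial$, so it cannot yield $\dim(Z\cap\partial)\geq D-1$. Tellingly, it never uses $n\geq 4$ once $d\geq 2$.

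\textbf{The missing idea} is to manufacture a second fixed point by a dimension count.
\begin{enumerate}
\item Let $Z'\subset\Mbar_{0,2}(X,d)$ be the preimage of $Z$ under forgetting the second marking, so $\dim Z'=D+1$, and let $Y=\ev_2(Z')$.
\item By your lower bound, $D\geq (n-1)d-2\geq n\geq\dim Y$ for $d\geq 2$; this is exactly where $n\geq 4$ is needed.
\item Hence for general $q\in Y$, the fiber $G_q$ of $\ev_2|_{Z'}$ has dimension $D+1-\dim Y\geq 1$.
\item A complete curve in $G_q$ has both markings fixed at $p\neq q$. Two-pointed bend-and-break, or your affineness argument when the image is fixed, forces it to meet the boundary.
\item So $\dim(G_q\cap\partial)\geq \dim G_q-1$, and letting $q$ vary gives $\dim(Z'\cap\partial)\geq D$.
\item Forgetting the second marking keeps these maps reducible, since $q\neq p$, and has fibers of dimension at most one. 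This gives $\dim(Z\cap\partial)\geq D-1$, also when $p\in S$.
\end{enumerate}
With this replacement, the rest of your induction goes through.

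\textbf{A smaller point in your last paragraph.} If the general member of a component has reducible domain, a non-free component of the domain does not force the whole family to sweep out a proper subvariety. Run your argument with the marking placed on the non-free component instead.
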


\begin{theorem}[Movable bend and break]\label{thm:ok-bb}
Every free curve on $X$ deforms to a chain of free lines.
\end{theorem}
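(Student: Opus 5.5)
The plan is to argue by induction on the $H$-degree $d$ of the free curve, in the style of Mori's bend-and-break, with \cref{thm:ok-dim} providing the dimension control. For $d=1$ there is nothing to prove, since a free curve of degree one is a free line. So let $d\geq 2$ and let $f:\Pj^1\to X$ be a free curve of degree $d$. Let $M\subset \Mbar_{0,0}(X,d)$ be the irreducible component containing $[f]$. By \cref{thm:ok-dim}, $M$ generically parametrizes free curves and has dimension $(n-1)d+n-3$.

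\textbf{Step 1: produce a degeneration.} Pass to the two-pointed space over $M$ and fix two general points $p,q\in X$. The family of curves in $M$ through $p$ and $q$ has dimension
\[
(n-1)d+n-3+2-2n=(n-1)d-n-1\geq n-3\geq 1,
\]
using $d\geq 2$ and $n\geq 4$. Mori's bend-and-break then says a one-parameter subfamily of maps through the fixed points $p,q$ cannot stay irreducible. Hence the closure $\overline M$ meets the boundary, and the boundary points reached this way have both $p$ and $q$ on the image.

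\textbf{Step 2: find a boundary divisor made of two free curves.} At a free curve the stack $\Mbar_{0,0}(X,d)$ is smooth of the expected dimension. The boundary is a normal crossings divisor there, so $\overline M\cap\partial$ has a component $B$ of dimension exactly $\dim M-1$. I will stratify the boundary by splitting type: a curve $C_1\cup C_2$ with $d_1+d_2=d$ glued at a point $x$. The dimension of such a stratum is bounded by
\[
\dim \ev_{d_1,1}^{-1}(x)+\dim\ev_{d_2,1}^{-1}(x)+n,
\]
integrated over $x$, using \cref{thm:ok-dim}. For $x\notin S$ this gives
\[
(n-1)d_1-2+(n-1)d_2-2+n=(n-1)d+n-4=\dim M-1,
\]
with equality only if each $C_i$ lies on a component of the expected dimension. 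By \cref{thm:ok-dim}, such a component generically parametrizes free curves. For $x\in S$, the finite set $S$ costs $n$ dimensions while each fiber gains at most $1$, so this stratum has dimension at most $\dim M-1-(n-2)<\dim M-1$. Deeper strata with more components lose at least one more dimension. Consequently the general member of $B$ is $C_1\cup C_2$ with both $C_i$ free, glued at a general point.

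\textbf{Step 3: induction.} Each $C_i$ is free of degree $d_i<d$, so by the inductive hypothesis it deforms to a chain of free lines. The gluing of free curves is unobstructed, so these deformations of $C_1$ and $C_2$ can be carried out while keeping the node, after moving the attaching point in the deformation. This gives a chain of free lines in $\overline M$. It remains to make the chain connect in the right order, which is automatic since one can attach at an endpoint after deforming.

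The main obstacle is Step 2. One must make sure the divisor $B$ actually met by bend-and-break is a stratum where the curves are free, and not an excess-dimensional locus coming from curves through points of $S$ or from non-free components. I would handle this purely by the dimension count above: any boundary component of $\overline M$ has dimension at least $\dim M-1$, and every non-generic stratum has dimension strictly smaller.
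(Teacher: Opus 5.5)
The paper gives no proof of this statement; it is quoted from Okamura as a black box. Your proposal is therefore an independent derivation from \cref{thm:ok-dim}, along the standard bend-and-break plus dimension-count route.

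Steps 1 and 2 are essentially sound, but two justifications need repair. First, every component of $\partial\cap M$ has codimension exactly one because the boundary is locally principal everywhere: it is pulled back from the boundary of the smooth stack of prestable genus-zero curves, locally $t_1\cdots t_k=0$. Smoothness at free curves is not the reason, since it says nothing at the possibly non-free boundary points produced by bend-and-break. Second, equality in your count forces each $C_i$ to be a \emph{general} point of a component of $\Mbar_{0,1}(X,d_i)$, and that is what makes it free with smooth source.

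The genuine gap is Step 3. Your inductive hypothesis is unpointed, but keeping the node requires a pointed statement.
\begin{itemize}
\item If you fix $x$ and $C_2$, you need a chain $\Gamma_1$, marked over $x$, in the same irreducible component of $\ev_{d_1,1}^{-1}(x)$ as $(C_1,x)$. Nothing you have rules out that this fiber is reducible with the chains lying in other components. Its irreducibility is \cref{thm:one-pointed-main}, which this paper deduces from Okamura's results, including this one.
\item If instead $x$ moves with the deformation of $C_1$, the tail is dragged through $\ev_{d_2,1}^{-1}(x_t)$. Its limit is an uncontrolled, possibly non-free map at which $\Mbar_{0,0}(X,d)$ may be singular, so deforming the tail afterwards need not stay in $M$.
\end{itemize}
Unobstructedness of gluing is local information at trees of free curves, not information along the path. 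Your claim that one can ``attach at an endpoint'' has the same defect: the family, not you, dictates where the node lands.

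Here is one way to close the induction; $x$ is general by Step 2.
\begin{enumerate}
\item Let $Z_i$ be the component of $\Mbar_{0,1}(X,d_i)$ through $(C_i,x)$; it is unique because $(C_i,x)$ is a smooth point.
\item Chains of free lines marked at a general point of an end line lie in $Z_i$. Indeed, the chain given by induction is a smooth point of $\Mbar_{0,0}(X,d_i)$, so nearby chains stay in its component. These marked chains sweep out a locus dominating $X$.
\item That locus therefore dominates the finite part $Y_i$ of the Stein factorization of the normalization of $Z_i\to X$.
\item For general $x$, the components of $Z_i\cap\ev^{-1}(x)$ are images of the irreducible fibers over the points of $Y_i$ above $x$. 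Hence every such component, in particular the component $W_i$ through $(C_i,x)$, contains a marked chain $(\Gamma_i,x)$.
\item Both evaluation maps are smooth at $(C_i,x)$, so $(C_1,x;C_2,x)$ is a smooth point of $Z_1\times_X Z_2$. Thus $W_1\times W_2$ lies in the unique component through it, whose image is your divisor $B\subset M$.
\item Moving inside $W_1\times W_2$ reaches $\Gamma_1\cup_x\Gamma_2$. This is a chain because both attachments are at end lines.
\end{enumerate}
Invoking \cref{thm:ok-unpointed} would make the statement immediate. However, that would be circular in the standard argument, where movable bend-and-break feeds the irreducibility proof through a chain-space argument like $\Theta_d$.
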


\begin{theorem}[Okamura's unpointed irreducibility theorem]\label{thm:ok-unpointed}
For every $d\geq 1$, the space $\Mbar_{0,0}(X,d)$ is irreducible of dimension
\[
  (n-1)d+n-3.
\]
\end{theorem}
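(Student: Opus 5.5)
The plan is to argue by induction on $d$, combining the dimension statement in \cref{thm:ok-dim} with movable bend and break (\cref{thm:ok-bb}). By \cref{thm:ok-dim}, every irreducible component $M$ of $\Mbar_{0,0}(X,d)$ has the expected dimension $(n-1)d+n-3$, and its general point is a free curve. By \cref{thm:ok-bb}, the closure of $M$ therefore contains a stable map whose domain is a chain of $d$ free lines. So it suffices to prove two things:
\begin{itemize}[label=--]
\item[(a)] the locus $B_d\subset\Mbar_{0,0}(X,d)$ of chains of $d$ free lines has a unique irreducible component whose general point is a general such chain, and every chain produced by \cref{thm:ok-bb} from a general point of $M$ can be taken in this component;
\item[(b)] every point of $B_d$ is a smooth point of $\Mbar_{0,0}(X,d)$, so it lies on exactly one irreducible component.
\end{itemize}
Then all components $M$ meet $B_d$ in the same irreducible piece, hence coincide.

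For (b), let $f:C=C_1\cup\dots\cup C_d\to X$ be a chain of free lines. Each $f^*T_X|_{C_i}$ is globally generated, and a standard gluing argument along the nodes gives $H^1(C,f^*T_X)=0$. The map is also an embedding or at worst has finite automorphisms, so the deformation space (including smoothing of nodes) is unobstructed. Hence $\Mbar_{0,0}(X,d)$ is smooth at $[f]$ (on the stack level, which suffices for local irreducibility of the coarse space) of dimension $-K_X\cdot dL+n-3=(n-1)d+n-3$, and so $[f]$ lies on a unique component.

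For (a), I would first treat $d=1$. On $X\setminus S$ the map $\ev_{1,1}$ has equidimensional fibers of dimension $n-3$. Hence every component of $\Mbar_{0,1}(X,1)$ meeting $\ev_{1,1}^{-1}(X\setminus S)$ dominates $X$ and has dimension $2n-3$. Since the general fiber is irreducible by \cref{thm:ok-line}, only one component dominates. Thus the locus of free lines, and also its $2$-pointed version, is irreducible.

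Inductively, I would let $B'_k$ be chains of $k$ free lines with a marked point on the last line, mapping to $X$ by evaluation. Then $B'_{k+1}$ is built as the fiber product of $B'_k$ with $\Mbar_{0,2}(X,1)^{\mathrm{free}}$ over $X$, followed by forgetting and remarking a point on the new line. Over the open set where the gluing point avoids $S$, the projection to $B'_k$ has irreducible fibers of constant dimension $n-3$ (the line fiber through that point) plus a $\Pj^1$ for the new marking. By the induction hypothesis $B'_k$ is irreducible, so this open part of $B'_{k+1}$ is irreducible by the standard irreducible-base, irreducible-equidimensional-fiber argument, given generic flatness or an openness argument on the dominant part. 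The part where some node lies over $S$ has strictly smaller dimension by the bounds of \cref{thm:ok-line}, so it is not a component of top dimension.

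The main obstacle I expect is making sure the degenerate chain obtained from \cref{thm:ok-bb} starting at a general point of $M$ actually lands in the top-dimensional irreducible piece of $B_d$, rather than in a lower-dimensional stratum where nodes map to $S$. My first attempt would use (b): since the chain is a smooth point, a neighborhood of it in $M$ is smooth of the expected dimension. Deforming within $B_d$ near that point (where chains of free lines form a smooth locus of codimension $d-1$ equal to the number of nodes), I would move the nodes off the finite set $S$. If this fails, I would instead strengthen \cref{thm:ok-bb} so that the limiting chain is general in its family, using that freeness is open and the nodes can be moved by the $(n-1)$-dimensional freedom of free lines through a point.
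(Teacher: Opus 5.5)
The paper does not prove this statement. It is quoted from Okamura \cite{Oka24}, alongside \cref{thm:ok-line,thm:ok-dim,thm:ok-bb}. Your proposal instead derives it from those three quoted results, and your skeleton is the standard one (essentially Okamura's):
\begin{itemize}
\item \cref{thm:ok-dim} already gives the expected dimension and generic freeness of every component, so only irreducibility is at stake;
\item \cref{thm:ok-bb} puts a chain of $d$ free lines in every component;
\item a tree of free curves has $H^1(C,f^*T_X)=0$, so it is a smooth point of the stack and lies on a unique component of the (locally normal) coarse space;
\item it then suffices that the chain locus is irreducible.
\end{itemize}
Your treatment of (b), of the bend-and-break step, and of the case $d=1$ is fine.

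The inductive step of (a) needs repair. \Cref{thm:ok-line} gives irreducibility only of a \emph{general} fiber of $\ev_{1,1}$, plus equidimensionality off $S$. It does not say that every fiber over $X\setminus S$ is irreducible, so the ``irreducible base, irreducible equidimensional fibers'' argument is not available on the open set you use. Organizing (a) around $S$ is also what creates the obstacle in your last paragraph.

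The fix is to use freeness rather than $S$. On $2$-pointed free lines, $\ev_1$ is smooth, by the vanishing of $H^1(\Pj^1,f^*T_X(-x))$ recalled in the preliminaries.
\begin{itemize}
\item $B'_{k+1}\to B'_k$ is a base change of this map, hence smooth and in particular flat. Likewise the endpoint map $B'_k\to X$ is smooth, hence dominant.
\item Flatness forces every irreducible component of $B'_{k+1}$ to dominate the irreducible $B'_k$.
\item The generic fiber of $B'_{k+1}\to B'_k$ is a base change of the geometrically irreducible generic fiber of $\ev_1$ on free lines (lines through a general point are free). Hence $B'_{k+1}$ is irreducible.
\end{itemize}
This proves irreducibility of the locus of \emph{all} chains of $d$ free lines, wherever their nodes map. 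So the chain that \cref{thm:ok-bb} produces in a component automatically lies in this locus, and no node-moving or strengthened bend-and-break is needed.

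By contrast, the paper's chain space $\Theta_d$ of \cref{lem:theta-fibers} confines all nodes to a small open $U$. That is enough for its one-pointed probe, but it would not suffice for the unpointed theorem, for exactly the reason you anticipated.
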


We will also use facts about free curves \cite{Kol96}. Recall that if $f:\Pj^1\to X$ is free, then $f^*T_X$ is globally generated, and therefore,
\[
  H^1(\Pj^1,f^*T_X(-x))=0
\]
for any point $x\in \Pj^1$. This implies smoothness of the one-pointed evaluation map at the corresponding pointed stable map. We employ the deformation theory of free curves and the usual generic-fiber properties of morphisms over a field of characteristic zero. In particular, since $k$ is algebraically closed of characteristic zero, a dominant morphism of $k$-varieties has geometrically irreducible generic fiber if and only if its fiber over a general closed point is irreducible, by openness of geometric irreducibility of fibers; we use this equivalence throughout.

First, let us record the fact that pointed Kontsevich spaces inherit irreducibility from the unpointed space.

\begin{lemma}\label{lem:marked-irreducible}
For every $d\geq 1$ and every $r\geq 0$, the space $\Mbar_{0,r}(X,d)$ is irreducible.
\end{lemma}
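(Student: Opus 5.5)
The plan is to compare $\Mbar_{0,r}(X,d)$ with the unpointed space through the forgetful morphism
\[
  \pi_r:\Mbar_{0,r}(X,d)\to \Mbar_{0,0}(X,d),
\]
and to push the irreducibility of \cref{thm:ok-unpointed} up along it. For $r=0$ there is nothing to prove. For $r\geq 1$, factor $\pi_r$ as the composite of the forgetful maps $\Mbar_{0,k+1}(X,d)\to\Mbar_{0,k}(X,d)$ for $k=0,\dots,r-1$; by induction on $r$ it suffices to show that each of these maps carries an irreducible space to an irreducible space. Each forgetful map is identified with the universal curve over $\Mbar_{0,k}(X,d)$ (for $d\geq1$ the stabilization is harmless), so it is proper, surjective and flat, and its fibers are connected nodal curves of arithmetic genus $0$. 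At the coarse level this holds up to a finite group quotient, which does not affect irreducibility.

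Given these properties, the argument runs as follows.

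\begin{enumerate}[label=(\roman*)]
\item By flatness, every irreducible component of $\Mbar_{0,k+1}(X,d)$ has dimension at least $\dim \Mbar_{0,k}(X,d)+1$ and dominates an irreducible closed subset of the base. Here I use that the base is irreducible of dimension $(n-1)d+n-3+k$, by induction together with \cref{thm:ok-unpointed}.
\item Over the open locus $U\subset \Mbar_{0,k}(X,d)$ of maps with smooth source $\Pj^1$, the map is a $\Pj^1$-bundle, up to stacky quotients. Hence its preimage is irreducible of dimension $\dim U+1$.
\item Any other component would have to lie over the boundary $\Mbar_{0,k}(X,d)\setminus U$, which has dimension at most $\dim U-1$ because the space is irreducible. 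Since the fibers are curves, the preimage of the boundary has dimension at most $\dim U$. This is strictly less than the lower bound from (i), so no such component exists.
\end{enumerate}

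It follows that $\Mbar_{0,k+1}(X,d)$ equals the closure of the preimage of $U$ and is therefore irreducible.

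The main obstacle is step (i). I need flatness, or at least equidimensionality with purely $1$-dimensional fibers, of the forgetful map on the coarse space. This is cleanest on the stack: the universal curve is flat and proper, so every component of the total space has dimension at least $\dim+1$, and passing to coarse spaces preserves dimensions and irreducibility. If flatness on the coarse reduced space is awkward, I would instead argue by upper semicontinuity of fiber dimension together with the fact, from \cref{thm:ok-dim}, that $\Mbar_{0,0}(X,d)$ has the expected dimension. Every component of the pointed Kontsevich stack has dimension at least the expected dimension $(n-1)d+n-3+r$, by the obstruction theory with $-K_X\cdot C=(n-1)d$. The preimage of the boundary has dimension at most $(n-1)d+n-4+r$, since the fiber dimension is $r$ and the boundary has dimension at most $(n-1)d+n-4$. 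So every component meets the smooth-source locus, and there the space is irreducible.
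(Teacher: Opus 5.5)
Your proposal is correct and follows essentially the same route as the paper: induct on $r$ via the forgetful map viewed as the flat, proper universal curve on the stack, use that over the dense smooth-source locus it is a $\Pj^1$-family, and rule out components lying over the boundary. The only difference is cosmetic. The paper invokes directly that a flat morphism has no component lying over a proper closed subset of an irreducible base, whereas you derive the same exclusion from the dimension bound that flatness gives. Your fallback via the obstruction-theoretic lower bound on component dimensions also works.
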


\begin{proof}
The case $r=0$ is \cref{thm:ok-unpointed}. The forgetful morphism
\[
  \pi_r:\Mbar_{0,r+1}(X,d)\to \Mbar_{0,r}(X,d)
\]
is the universal curve over the Kontsevich stack, followed on coarse spaces by stabilization. It is flat with connected nodal curve fibers on the stack, and over the dense open locus parametrizing maps from a smooth irreducible source $\Pj^1$, it is simply the universal $\Pj^1$-family. In particular, over a dense open subset of the irreducible base $\Mbar_{0,r}(X,d)$, the total space is irreducible with irreducible generic fiber. Also, flatness prevents an irreducible component of the total space from lying over a proper closed subset of the base. Hence, $\Mbar_{0,r+1}(X,d)$ is irreducible; induction on $r$ proves the lemma.
\end{proof}

\section{Stein factorization lemma}

\begin{lemma}\label{lem:stein-probe}
Let $B$ be a normal integral variety over $k$. Let
\[
  f:M\to B
\]
be a proper dominant morphism with $M$ integral. Let
\[
  M\xrightarrow{\phi}Y\xrightarrow{\pi}B
\]
be the Stein factorization of $f$, so that $\pi$ is finite. Suppose that there exists an integral variety $T$ and a morphism
\[
  \gamma:T\to M
\]
over $B$ such that:
\begin{enumerate}[label=\textup{(\roman*)}]
\item the composite $h=f\circ \gamma:T\to B$ has geometrically irreducible generic fiber;
\item the induced map $\psi:T\to Y$ is dominant.
\end{enumerate}
Then the generic fiber of $f$ is geometrically irreducible.
\end{lemma}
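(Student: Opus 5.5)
The plan is to show that $\pi:Y\to B$ is birational, and then deduce the result from the standard properties of the Stein factorization. Since $M$ is integral and $\phi$ has connected fibers with $\phi_*\OO_M=\OO_Y$, the variety $Y$ is integral and normal. In characteristic zero, the generic fiber of $\phi$ is then geometrically integral. Indeed, $k(Y)$ is algebraically closed in $k(M)$, because $Y$ is normal and $\phi_*\OO_M=\OO_Y$, and $k(M)/k(Y)$ is separable. Consequently, if $\pi$ is birational, then $\pi$ is an isomorphism over a dense open subset of $B$, by Zariski's Main Theorem (since $\pi$ is finite, $B$ normal). The generic fiber of $f$ is then the generic fiber of $\phi$, hence geometrically irreducible. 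So the whole task reduces to proving $\deg\pi=1$.

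To bound the degree, I would compare fields of functions. Let $\eta$ be the generic point of $B$ and $K=k(B)$. By (ii), $\psi:T\to Y$ is dominant, so $\psi$ induces a $K$-embedding $k(Y)\hookrightarrow k(T)$. By (i), the generic fiber $T_\eta$ is geometrically irreducible over $K$. This is equivalent to the statement that $K$ is separably (hence, in characteristic zero, fully) algebraically closed in $k(T)$. Now $k(Y)$ is a finite extension of $K$, since $\pi$ is finite. Its image in $k(T)$ therefore consists of elements algebraic over $K$, so it lies in $K$. Thus $k(Y)=K$, and $\pi$ is finite and birational.

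An alternative, more geometric route, which I would mention as a sanity check, goes through a general point $b\in B$. The fiber $\pi^{-1}(b)$ consists of $\deg\pi$ reduced points, by generic étaleness in characteristic zero. Each $\phi$-fiber over these points is a connected component of $f^{-1}(b)$. The fiber $h^{-1}(b)$ is irreducible and maps into $\pi^{-1}(b)$, so it lands in a single point. On the other hand, dominance of $\psi$ forces $\psi(h^{-1}(b))$ to meet every point of $\pi^{-1}(b)$ for general $b$, which is only possible if $\deg\pi=1$. This last claim requires care. One has to check that $\psi(T)$ is dense, hence contains a dense open subset of $Y$, and that this open subset meets every point of $\pi^{-1}(b)$ for general $b$; the latter holds because the complement has smaller dimension and so maps to a proper closed subset of $B$.

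The main obstacle is justifying the step that the generic fiber of $\phi$ is geometrically irreducible, i.e.\ that $k(Y)$ is algebraically closed in $k(M)$. The key input is $\phi_*\OO_M=\OO_Y$ together with normality of $Y$. Any element of $k(M)$ that is algebraic over $k(Y)$ is integral over $\OO_Y$ on a suitable open subset, so it defines a regular function on $\phi^{-1}$ of a neighborhood, by properness and normality considerations. Hence it lies in $\OO_Y$ there. Here I would cite the standard fact (e.g.\ EGA or Hartshorne III.11) rather than reprove it.
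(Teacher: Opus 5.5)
Your field-theoretic step is correct and is the paper's argument: $\psi$ gives a $K$-embedding $k(Y)\hookrightarrow k(T)$, and (i) makes $K$ algebraically closed in $k(T)$. Hence $k(Y)=K$ and $\pi$ is birational. But this only shows that the generic fiber of $f$ is geometrically \emph{connected}.

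The gap is the step you flagged as the main obstacle. That $Y$ is normal and that $k(Y)$ is algebraically closed in $k(M)$ do not follow from $\phi_*\OO_M=\OO_Y$ unless $M$ is normal. An element of $k(M)$ integral over $\OO_Y$ is integral over the local rings of $M$, but it is a regular function only where these are integrally closed. (If $f$ is finite then $Y=M$, which need not be normal.) Without such a hypothesis the statement is false:
\begin{itemize}
\item Let $B=\mathbb{A}^1_t$, $M=\{x^2=ty^2\}\subset B\times\Pj^2_{[x:y:z]}$ (the Whitney umbrella in the chart $z=1$), and $T=B\times\{[0:0:1]\}\subset M$. The same works for $\{sx^2=ty^2\}\subset\Pj^1\times\Pj^2$ if you want everything projective.
\item The polynomial $x^2-ty^2$ is irreducible, so $M$ is integral.
\item $f$ is flat with plane-conic fibers, so $f_*\OO_M=\OO_B$ and $Y=B$.
\item $h=\psi=\mathrm{id}_B$, so (i) and (ii) hold.
\item Yet the generic fiber, the conic $x^2=ty^2$ over $k(t)$, splits over $\overline{k(t)}$ into the two lines $x=\pm\sqrt{t}\,y$.
\end{itemize}
In your terms, $x/y\in k(M)$ squares to $t$. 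So it is algebraic over $k(Y)$ but not in it, and it fails to be regular exactly along $T$, the non-normal locus of $M$.

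The paper's proof has the same gap. Its claim that $Y$ is the normalization of $B$ in the algebraic closure $L$ of $K$ in $k(M)$ requires $M$ to be normal; in the example $L=K(\sqrt{t})$ while $Y=B$. A correct version adds the hypothesis that $\gamma(T)$ meets the normal locus $M^\circ$ of $M$.
\begin{itemize}
\item Then $\gamma$ lifts over the dense open $T^\circ=\gamma^{-1}(M^\circ)$ to the normalization $M^\nu$.
\item The Stein factor $Y^\nu$ of $M^\nu\to B$ does have function field $L$.
\item Dominance of $T^\circ\to Y^\nu$ is automatic, since $h$ is dominant and $Y^\nu$ is irreducible and finite over $B$.
\item Your embedding argument then gives $L=K$. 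So $K$ is algebraically closed in $k(M^\nu)=k(M)$, and in characteristic zero the generic fiber of $f$ is geometrically integral.
\end{itemize}
In the paper's applications this hypothesis would still need checking. One way is to show that a general point of each probe is an unobstructed stable map with trivial automorphism group (a chain of free curves), where $\Mbar_{0,r}(X,d)$ is smooth.
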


\begin{proof}
Let $K=k(B)$, $E=k(M)$, and $F=k(T)$. Since $B$ is normal, the finite Stein factor $Y$ is the normalization of $B$ in the algebraic closure $L$ of $K$ inside $E$. Thus $k(Y)=L$.

The morphism $\psi:T\to Y$ is dominant, so $L=k(Y)$ embeds into $F=k(T)$ over $K$. Since the generic fiber of $h:T\to B$ is geometrically irreducible, $K$ is algebraically closed in $F$. Thus, the algebraic extension $L/K$ contained in $F/K$ is trivial; it follows that $L=K$.

Thus, $K$ is algebraically closed in $E=k(M)$. Since $\operatorname{char}k=0$, the extension $E/K$ is separably generated. It follows that the generic fiber of $f$ is geometrically integral, hence geometrically irreducible.
\end{proof}

\begin{remark}
Equivalently, one may prove the lemma from the rigidity lemma. After shrinking $B$, the finite morphism $Y\to B$ is finite \'etale and the fibers of $T\to B$ are connected. Since a connected fiber of $T\to B$ maps to the finite discrete fiber of $Y\to B$, the map $T\to Y$ is constant on the fibers of $T\to B$. The rigidity lemma then provides a section $B\to Y$. Dominance of $T\to Y$ implies that this section is dense in $Y$, hence equal to $Y$ after shrinking. Thus, $Y\to B$ has degree one, leading to the same conclusion. 
\end{remark}

\section{One-pointed evaluation fibers}

We now prove \cref{thm:one-pointed-main}.

Let
\[
  L_1:=\Mbar_{0,1}(X,1)_{\mathrm{red}}
\]
be the one-pointed line space and let
\[
  e:L_1\to X
\]
be the line evaluation map. By \cref{thm:ok-line} and openness of geometric irreducibility of fibers, there is a nonempty open subset
$U\subset X$
such that
$e^{-1}(U)\to U$
is flat with geometrically irreducible fibers.

Let
\[
  L_2:=\Mbar_{0,2}(X,1)_{\mathrm{red}}
\]
and define
\[
  P:=(\ev_1,\ev_2)^{-1}(U\times U)\subset L_2.
\]
A point of $P$ is a line together with two marked points on it, both mapping to $U$. Let
\[
  a,b:P\to U
\]
be the two evaluation maps.

\begin{lemma}\label{lem:P-fibers}
After possibly shrinking $U$, the morphism
\[
  a:P\to U
\]
is flat with geometrically irreducible fibers.
\end{lemma}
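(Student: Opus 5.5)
The plan is to factor $a$ through the forgetful map and the universal line. Consider the forgetful morphism $\rho:L_2\to L_1$ that forgets the second marked point. Over the locus of lines, it is the universal line: its fiber over a pointed line $(\ell,x)$ is $\ell\cong\Pj^1$, with the second point moving along $\ell$. A line has irreducible source, so there is no bubbling and no stabilization. The only exception is the point $x$ itself, where the stable map sprouts a contracted component, but that fiber is still a $\Pj^1$. Hence $\rho$ is a $\Pj^1$-bundle (smooth and projective) over $L_1$, and $\ev_1=e\circ\rho$. Restricting to $U$, the map $\ev_1^{-1}(U)\to U$ is the composite of the $\Pj^1$-bundle $\rho^{-1}(e^{-1}(U))\to e^{-1}(U)$ with $e^{-1}(U)\to U$. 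The latter is flat with geometrically irreducible fibers by the choice of $U$. The composite is therefore flat, and its fiber over $p$ is a $\Pj^1$-bundle over the irreducible variety $e^{-1}(p)$, hence irreducible of dimension $n-2$.

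The set $P$ is the open subset of $\ev_1^{-1}(U)$ cut out by the condition $\ev_2\in U$. Flatness is preserved under restriction to opens, so $a$ is flat. Each fiber $a^{-1}(p)$ is an open subset of the irreducible $\ev_1^{-1}(p)$, so it is irreducible provided it is nonempty. Nonemptiness amounts to this: for each $p\in U$, some line through $p$ meets $U$ at a point. Let $Z=X\setminus U$. A line $\ell$ through $p$ fails only if $\ell\subset Z$. So I need that, for $p\in U$, not every line through $p$ lies in $Z$.

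This is the step where shrinking $U$ enters, and I expect it to be the main (mild) obstacle. Consider the incidence $\{(\ell,x):\ell\subset Z\}\subset L_1$. Its image in $X$ is contained in $Z$, a proper closed set. So for $p\in U$ no line through $p$ lies in $Z$. Hence every line through $p\in U$ already meets $U$ in a dense open subset of $\ell$, and $a^{-1}(p)$ is dense open in $\ev_1^{-1}(p)$. No shrinking is actually required, but one may shrink $U$ anyway to ensure the generic flatness statements hold.

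Geometric irreducibility then follows because every argument above was carried out over the algebraically closed field $k$ for closed points. It holds at the generic fiber by the equivalence recalled in the preliminaries, or directly, since a $\Pj^1$-bundle over a geometrically irreducible base is geometrically irreducible. The remaining thing to check carefully is that $\rho$ really is a $\Pj^1$-bundle on the reduced coarse spaces, including at the sprouted points. I would verify this using $\Mbar_{0,2}(X,1)\cong\Mbar_{0,1}(X,1)\times_{\Mbar_{0,0}(X,1)}\Mbar_{0,1}(X,1)$: degree-one maps have no automorphisms, so coarse and fine spaces agree, and the universal curve over the line space is the $\Pj^1$-bundle.
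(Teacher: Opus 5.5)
Your proof is correct and follows the paper's argument: $a^{-1}(p)$ is a nonempty open subset of the universal curve over the irreducible line fiber $e^{-1}(p)$. It is nonempty simply because $p$ itself lies on every line through $p$, so the incidence argument with $Z$ is unnecessary. The one real difference is how you get flatness. You obtain it directly, since $a$ is the restriction to an open subset of $e\circ\rho$, where $\rho$ is a smooth $\Pj^1$-bundle and $e$ is flat over $U$; the paper instead invokes generic flatness after shrinking $U$. This, together with your check that $\rho$ is a $\Pj^1$-bundle everywhere rather than only ``generically a $\Pj^1$-family'', makes the paper's step more precise and shows that no shrinking of $U$ is needed.
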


\begin{proof}
For a point $x\in U$, the fiber $a^{-1}(x)$ is an open subset of the universal curve over the line fiber $e^{-1}(x)$. The line fiber is geometrically irreducible by the choice of $U$, and the universal curve over it is generically a $\Pj^1$-family, hence geometrically irreducible. Removing the condition that the second marked point land outside $U$ removes a closed subset not equal to the whole fiber, because every line through $x\in U$ meets $U$. Thus, $a^{-1}(x)$ is geometrically irreducible. Flatness follows after shrinking $U$ by generic flatness.
\end{proof}

For $d\geq 1$, define the chain space as the iterated fiber product
\[
  \Theta_d=P\times_{U}P\times_{U}\cdots\times_{U}P
\]
with $d$ factors. A point of $\Theta_d$ is a chain
\[
  x_0\xrightarrow{\ell_1}x_1\xrightarrow{\ell_2}\cdots\xrightarrow{\ell_d}x_d,
\]
where all $x_i\in U$ and $x_{i-1},x_i\in \ell_i$. Let
\[
  \alpha_d:\Theta_d\to U
\]
be the morphism that remembers the starting point $x_0$.

\begin{lemma}\label{lem:theta-fibers}
For every $d\geq 1$, the morphism
\[
  \alpha_d:\Theta_d\to U
\]
is flat with geometrically irreducible fibers.
\end{lemma}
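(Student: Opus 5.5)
Induction on $d$. For $d=1$, $\Theta_1=P$ and $\alpha_1=a$, so the claim is exactly \cref{lem:P-fibers}. For the inductive step, write $\Theta_{d}=\Theta_{d-1}\times_U P$, where the fiber product is taken via the endpoint map $\beta_{d-1}:\Theta_{d-1}\to U$ (remembering $x_{d-1}$) and $a:P\to U$. The morphism $\alpha_d$ then factors as
\[
  \Theta_d=\Theta_{d-1}\times_{U}P\xrightarrow{\ \mathrm{pr}_1\ }\Theta_{d-1}\xrightarrow{\ \alpha_{d-1}\ }U .
\]
The first map is the base change of $a$ along $\beta_{d-1}$. By \cref{lem:P-fibers} it is therefore flat with geometrically irreducible fibers, and these fibers are all isomorphic to fibers $a^{-1}(x_{d-1})$, which share one dimension $n-2$ since $a$ is flat over the irreducible $U$. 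By induction, $\alpha_{d-1}$ is flat with geometrically irreducible fibers. A composition of flat morphisms is flat, so $\alpha_d$ is flat.

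For irreducibility, fix $x_0\in U$ and consider
\[
  \alpha_d^{-1}(x_0)=\alpha_{d-1}^{-1}(x_0)\times_U P \longrightarrow \alpha_{d-1}^{-1}(x_0).
\]
This map is flat, hence open. Its base is geometrically irreducible by induction, and all its fibers are geometrically irreducible of the same dimension. I would apply the standard criterion: if $g:Z\to W$ is an open (e.g.\ flat) morphism of finite type with $W$ irreducible and all fibers irreducible, then $Z$ is irreducible. The argument runs as follows. Take an irreducible component $Z_1$ of $Z$ containing the generic point of the generic fiber. Every other component $Z_i$, removed of its intersections with the others, is open in $Z$, so its image is open in $W$ and thus contains the generic point of $W$. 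Irreducibility of the generic fiber then forces $Z_i=Z_1$.

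The main point needing care is the passage to geometric irreducibility and to the reduced structures. I would handle it by base changing to $\bar k$, or noting that $k$ is already algebraically closed so the closed fibers suffice, and by working with $\Theta_d$ reduced if necessary. Flatness of $a$ ensures the fiber products behave well, so no embedded or extra components appear: a component of $\Theta_d$ lying inside a proper closed subset of a fiber would contradict openness of the flat projection. I expect this criterion step, with the flatness bookkeeping, to be the only place requiring attention. The rest is formal induction.
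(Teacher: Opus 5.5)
Your proposal is correct and follows the paper's proof. Both argue by induction via $\Theta_d=\Theta_{d-1}\times_{\beta_{d-1},U,a}P$, treat $\Theta_d\to\Theta_{d-1}$ as a base change of $a$, get flatness by composition, and deduce irreducibility of $\alpha_d^{-1}(x_0)$ from the irreducible base $\alpha_{d-1}^{-1}(x_0)$ and the irreducible fibers; the only difference is that you spell out the open-map irreducibility criterion (flat hence open, irreducible base, irreducible fibers) that the paper uses implicitly when it says the fibers of $\alpha_d$ are geometrically irreducible.
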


\begin{proof}
For $d=1$, this is \cref{lem:P-fibers}. We argue by induction on $d$. Assume the result holds for $d-1$. Then,
\[
  \Theta_d=\Theta_{d-1}\times_{\beta_{d-1},U,a}P,
\]
where $\beta_{d-1}$ records the endpoint of the chain of length $d-1$. The projection $\Theta_d\to \Theta_{d-1}$ is the base change of $a:P\to U$, so it is flat with geometrically irreducible fibers. Since the fibers of $\alpha_{d-1}$ are geometrically irreducible by induction, the fibers of $\alpha_d$ are geometrically irreducible. Flatness follows by composition.
\end{proof}

There is a gluing morphism
\[
  \gamma_d:\Theta_d\to \Mbar_{0,1}(X,d)_{\mathrm{red}}
\]
which glues the chain of $d$ lines and forgets every marking except the first. It satisfies
\[
  \ev_{d,1}\circ \gamma_d=\alpha_d.
\]

\begin{proof}[Proof of \cref{thm:one-pointed-main}]
Set
\[
  M_{d,1}:=\Mbar_{0,1}(X,d)_{\mathrm{red}}.
\]
By \cref{lem:marked-irreducible}, $M_{d,1}$ is integral. Consider the Stein factorization
\[
  M_{d,1}\to Y_d\to X
\]
of $\ev_{d,1}$. The map $\Theta_d\to X$ dominates the open set $U\subset X$, hence dominates $X$. Therefore, the image of $\Theta_d\to Y_d$ maps dominantly to $X$. Since $Y_d\to X$ is finite and $Y_d$ is irreducible, this image is dense in $Y_d$.

By \cref{lem:theta-fibers}, the map $\Theta_d\to X$ has geometrically irreducible generic fiber. Endowed with its reduced structure, $\Theta_d$ is integral: it is irreducible by \cref{lem:theta-fibers}, being flat over the irreducible base $U$ with geometrically irreducible fibers, and passing to the reduction affects neither this property nor the geometric irreducibility of the generic fiber. Thus, the hypotheses of \cref{lem:stein-probe} are satisfied with $T=\Theta_d$, and $\ev_{d,1}$ has geometrically irreducible generic fiber.

The dimension statement follows from \cref{thm:ok-dim}.
\end{proof}

\section{Conics through two general points}

We now prove \cref{thm:conic-base}. Throughout this section, $p,q\in X$ are general points.

\begin{lemma}\label{lem:no-line-through-two-points}
For two general points $p,q\in X$, there is no line on $X$ containing both $p$ and $q$.
\end{lemma}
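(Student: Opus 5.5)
Fix a general point $p\in X$ and consider the locus $C_p\subset X$ swept out by all lines through $p$, i.e.\ the image of the universal line over $\ev_{1,1}^{-1}(p)$ under the evaluation at a second marked point. The statement to prove is that a general $q$ does not lie in $C_p$; equivalently, the set of pairs $(p,q)$ joined by a line is a proper closed subset of $X\times X$. I will verify this by a dimension count.

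Take the two-pointed line space $L_2=\Mbar_{0,2}(X,1)_{\mathrm{red}}$ and its evaluation morphism
\[
  (\ev_1,\ev_2):L_2\to X\times X .
\]
By \cref{thm:ok-unpointed} with $d=1$, the space $\Mbar_{0,0}(X,1)$ has dimension $(n-1)+n-3=2n-4$. The forgetful map $L_2\to \Mbar_{0,0}(X,1)$ has two-dimensional fibers, since it records two points on a line, so
\[
  \dim L_2=2n-2<2n=\dim(X\times X).
\]
Because $L_2$ is projective, its image under $(\ev_1,\ev_2)$ is a closed subset of $X\times X$ of dimension at most $2n-2$, hence a proper closed subset. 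Any pair $(p,q)$ outside this image, and distinct, admits no line through both points. That proves the lemma.

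As a pointwise cross-check, one can argue with \cref{thm:ok-line} directly. For $p\notin S$, the fiber $\ev_{1,1}^{-1}(p)$ has dimension $n-3$, so the cone of lines $C_p$ has dimension at most $n-2<n$. Hence for every such $p$, a general $q$ avoids $C_p$. Either form of the argument suffices.

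There is no real obstacle here. The only points needing care are that the count uses the full space $\Mbar_{0,0}(X,1)$, which is allowed since it is irreducible of the stated dimension, and that the image is closed because the evaluation morphism is proper.
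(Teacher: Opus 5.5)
Your proof is correct and matches the paper's argument: the two-pointed family of lines has dimension $2n-2<2n$, so its closed image in $X\times X$ is a proper subset. The only difference is cosmetic: you obtain $\dim \Mbar_{0,0}(X,1)=2n-4$ from \cref{thm:ok-unpointed}, while the paper gets it from \cref{thm:ok-line}, and both are valid.
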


\begin{proof}
The space of lines has dimension $2n-4$ by \cref{thm:ok-line}. The two-pointed universal family of lines has dimension $2n-2$, while $X\times X$ has dimension $2n$. Hence, its image in $X\times X$ is a proper closed subset. A general pair $(p,q)$ is not in this image.
\end{proof}

We will use the following reduction from stable maps to conics.

\begin{lemma}\label{lem:degree-two-incidence}
Let $p,q\in X$ be a pair of points which are not contained in a line on $X$. Then no point of $F_{2,2}(p,q)$ is a double cover of a line. Moreover, after passing to reduced structures, $F_{2,2}(p,q)$ is naturally identified with the incidence space of connected degree-two genus-zero curves $C\subset X$ containing $p$ and $q$. Such a curve is either an integral conic or a chain of two lines, with $p$ and $q$ lying on the two distinguished ends in the reducible case.
\end{lemma}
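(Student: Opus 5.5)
The plan is to classify the stable maps in $F_{2,2}(p,q)$ by the combinatorial type of the source, and then to show that the map sending a stable map to its image curve is a bijection onto the stated incidence space.

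Let $[f:C\to X;x_1,x_2]$ be a point of $F_{2,2}(p,q)$. Since $H$ is ample and $f_*[C]$ has $H$-degree $2$, the image $f(C)$ is one of the following:
\begin{enumerate}[label=\textup{(\alph*)}]
\item an integral curve of degree $2$, onto which $f$ maps birationally;
\item an integral curve of degree $1$, i.e. a line $\ell$, with $f_*[C]=2[\ell]$;
\item a union of two distinct lines $\ell_1\cup\ell_2$.
\end{enumerate}

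In case (b), both $p=f(x_1)$ and $q=f(x_2)$ lie on $f(C)=\ell$. This contradicts the hypothesis, so double covers of lines (and any map with image a single line) are excluded.

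In case (a), $f(C)$ is an integral curve of degree $2$ spanning a plane, hence a smooth conic. Here $C$ is a single $\Pj^1$: any further component would be contracted, and by stability a contracted component carries at least three special points. The map $f$ is an isomorphism onto its image. Since $p\neq q$, the markings $x_1,x_2$ are determined by $f^{-1}(p)$ and $f^{-1}(q)$.

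In case (c), the image is connected because $C$ is connected. The lines $\ell_1,\ell_2$ therefore meet, and the image is a chain of two lines. By the hypothesis, $p$ and $q$ cannot both lie on a single $\ell_i$. Neither can lie on $\ell_1\cap\ell_2$, since then the other point would share a line with it. So $p\in\ell_1\setminus\ell_2$ and $q\in\ell_2\setminus\ell_1$ after relabeling, which gives the ``distinguished ends'' statement. It remains to determine the source. The nonconstant components are two copies of $\Pj^1$, mapping isomorphically onto $\ell_1$ and $\ell_2$. The possible contracted components are constrained as follows:
\begin{itemize}
\item $x_1$ and $x_2$ must lie on the noncontracted components over $p$ and $q$, because those points are not nodes of the image;
\item a contracted component needs three special points, and the only candidates are nodes joining it to the two line components;
\item hence there is no contracted component, and $C$ is the two lines glued at a single node mapping to $\ell_1\cap\ell_2$.
\end{itemize}
Two lines meet in at most one point, since a line is determined by two of its points. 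So the stable map is determined by the image curve.

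This gives a bijective morphism from $F_{2,2}(p,q)_{\mathrm{red}}$ to the incidence locus $I(p,q)$ of degree-$2$ connected genus-zero subcurves through $p,q$. This locus sits inside the Hilbert scheme (or Chow variety) of conics, and the morphism is $[f]\mapsto f(C)$. The main obstacle is upgrading this bijection to an isomorphism of reduced spaces. My approach is to construct the inverse family directly. Over $I(p,q)$, the universal curve $\mathcal C\subset I(p,q)\times X$ is a flat family of conics: its Hilbert polynomial is constant, $2t+1$. Its fibres are nodal, with arithmetic genus $0$. The two sections $p,q$ lie in the smooth locus of every fibre, because neither point is ever a node, by the argument above. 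Therefore $(\mathcal C\to I(p,q),\,p,q,\,\mathcal C\hookrightarrow X)$ is a family of $2$-pointed stable maps and induces a morphism $I(p,q)\to F_{2,2}(p,q)$. This morphism is inverse to the image map on points. Both spaces are reduced, and the compositions are identity on points. I would argue they are identities by checking over the dense open set of smooth conics, where both sides are the same Hilbert functor. Then I would extend by separatedness, using that the reduced spaces are determined by points together with the morphisms.
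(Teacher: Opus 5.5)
Your proposal is correct and follows essentially the paper's route: you exclude double covers of a line by the hypothesis, use stability together with $p\neq q$ (and the fact that neither $p$ nor $q$ can be the node of a line pair) to rule out contracted components, and recover the stable map from its image, and your Hilbert-scheme construction of the inverse family is a more explicit version of the paper's one-line ``these constructions are algebraic in families'' (your ``spans a plane'' and ``two lines meet in at most one point'' steps use that $H$ is very ample, which is the setting $H^n\in\{3,4,5\}$ where the lemma is applied). One small repair: you cannot assume smooth conics are dense in $I(p,q)$ for an arbitrary pair $p,q$, since irreducibility is only established later and only for general $p,q$; but density is unnecessary, because the equalizer of two $k$-morphisms from a reduced finite-type $k$-scheme to a separated one is closed and contains every closed point, so both composites are the identity once they are the identity on closed points.
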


\begin{proof}
A degree-two stable map whose image is a line is a double cover of that line. Since the image contains both marked points, this would give a line on $X$ containing $p$ and $q$, contrary to the hypothesis. Thus every nonconstant image has degree two.

If the source is irreducible, the map is birational onto a degree-two curve, hence onto an integral conic. The normalization of an integral conic, together with the two inverse images of $p$ and $q$, recovers the stable map uniquely. If the source is reducible, the two nonconstant components have degree one. Since no line on $X$ contains both $p$ and $q$, the two markings lie on different line components, and the image is a chain of two lines joining $p$ to $q$. Conversely, such a chain determines a unique two-pointed stable map by mapping each component isomorphically to the corresponding line.

No contracted component can occur in these two-pointed degree-two maps. Indeed, there are at most two nonconstant components. A contracted component carrying no marking would need at least three nodes, which is impossible. If a contracted component carries one marking, say the marking mapping to $p$, then stability forces it to meet at least two nonconstant components; the other marking mapping to $q$ lies on one of the corresponding degree-one components, forcing a line on $X$ to contain both $p$ and $q$. This is excluded. A contracted component cannot carry both markings because $p\neq q$. These constructions are algebraic in families, so they identify the reduced Kontsevich fiber with the corresponding conic-incidence space.
\end{proof}

\section{Cubic hypersurfaces}

Assume $H^n=3$, so $X=X_3\subset \Pj^{n+1}$ is a smooth cubic hypersurface. Rational curves on cubic hypersurfaces, and more generally on hypersurfaces of low degree, have been studied in \cite{CS09,HRS04}.

\begin{proposition}\label{prop:cubic-conics}
For general $p,q\in X$, the fiber $F_{2,2}(p,q)$ is geometrically irreducible of dimension $n-3$. More precisely, if
\[
  L:=\overline{pq}
\]
and
\[
  L\cap X=p+q+r
\]
scheme-theoretically, then
\[
  F_{2,2}(p,q)\cong \ev_{1,1}^{-1}(r).
\]
\end{proposition}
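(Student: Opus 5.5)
The plan is to use the classical plane-section description of conics on a cubic. By \cref{lem:no-line-through-two-points} and \cref{lem:degree-two-incidence}, it suffices to identify the reduced incidence space of connected degree-two genus-zero curves $C\subset X$ through $p$ and $q$ (integral conics, or chains of two lines with $p,q$ on different lines) with $\ev_{1,1}^{-1}(r)$.

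\textbf{Step 1: the line $L$.} For general $p,q$, the line $L=\overline{pq}$ is not contained in $X$, so $L\cap X$ is a degree-three scheme. I will check that its residual point $r$ is a general point of $X$ distinct from $p,q$. Fix $p$ general and let $q$ vary. The map $q\mapsto r$ is a birational involution of $X$ (projection from $p$ composed with the residual intersection). Hence $r$ is general, and in particular $r\notin S$ and $r\neq p,q$.

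\textbf{Step 2: conics to lines through $r$.} Take a degree-two curve $C\ni p,q$. Its linear span is a plane $\Pi$; in the reducible case the two lines meet, so they also span a plane. Since $p,q\in \Pi$, we get $L\subset \Pi$. The plane $\Pi$ is not contained in $X$: otherwise $p$ would lie on a plane in $X$, which a general point does not, since the planes on $X$ form a family of dimension too small to cover $X$ (for $n\ge 4$ one checks the dimension). Hence $\Pi\cap X$ is a plane cubic containing $C$, so $\Pi\cap X=C+\ell$ for a line $\ell$. Restricting to $L$ gives $L\cap X=(C\cap L)+(\ell\cap L)$. Now $C\cap L\supseteq\{p,q\}$ and has length at most $2$, because $L\not\subset C$: a conic containing a line is reducible, and neither component is $L$, since $L\not\subset X$. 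Therefore $\ell\cap L=r$, that is, $\ell$ is a line through $r$. Equivalently, $\ell\cap L=r$ makes $\ell$ a pointed line in $\ev_{1,1}^{-1}(r)$.

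\textbf{Step 3: lines through $r$ to conics.} Conversely, take $\ell\ni r$ with $\ell\ne L$ (automatic, since $L\not\subset X$). Set $\Pi=\mathrm{span}(L,\ell)$. Then $\Pi\cap X=\ell+C$, where $C$ is a residual conic containing $p,q$ by the restriction to $L$ computed above. $C$ is connected of degree two and genus zero. If $C$ is reducible, the lines through $p$ and through $q$ are distinct by \cref{lem:no-line-through-two-points}, so $C$ is a valid chain. The remaining case is $C$ a double line, which would lie on a line through $p$ and $q$ and is therefore excluded.

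\textbf{Step 4: families.} Both constructions work in families, via the relative linear span and the residual scheme of a divisor in a flat family of planes. They are mutually inverse on closed points, so they give an isomorphism of reduced spaces, in both directions, over the smooth base $\ev_{1,1}^{-1}(r)$. Irreducibility and dimension $n-3$ then follow from \cref{thm:ok-line}, because $r$ is general.

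\textbf{Main obstacle.} The key step is making Step 4 genuinely an isomorphism rather than a bijection. I would first try to write the inverse maps explicitly as morphisms from the Grassmannian incidence of planes containing $L$, namely $\Pj^{n-1}$. This realises both spaces as the locus of planes $\Pi\supset L$ with $\Pi\cap X$ containing a line through $r$ (resp. a conic through $p,q$), and identifies each via flat residual families.
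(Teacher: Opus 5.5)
Your argument is the paper's: $\Pi=\langle C\rangle$ contains $L$, and the residual line of $\Pi\cap X$ passes through $r$. Conversely, $\langle L,\ell\rangle$ cuts out a residual conic through $p,q$, and Okamura's line-fiber theorem gives irreducibility and dimension $n-3$. Your handling of reducible and non-reduced residual conics is correct. The birational involution $q\mapsto r$ is a valid substitute for the paper's symmetric incidence of collinear triples.

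\textbf{The error.} Your justification of $\Pi\not\subset X$ in Step 2 fails as written. You claim a general point of $X$ lies on no plane contained in $X$, but this is false for $n\geq 6$. The planes of $X$ through $p$ correspond to lines on the $(2,3)$ complete intersection in $\Pj^{n-1}$ that parametrizes lines of $X$ through $p$. Such lines exist, in a family of expected dimension $2n-11$, once $n\geq 6$. Globally, the universal plane has expected dimension $3n-11\geq n$.

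\textbf{The fix.} The conclusion you need is immediate, and it is exactly what the paper uses: $L\subset\Pi$ and $L\not\subset X$ (your Step 1), so $\Pi\not\subset X$. You already rely on this implicitly in Step 3. With that replacement the proof is complete.

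Your Step 4 is more careful than the paper, which simply asserts that the two constructions are mutually inverse. For the irreducibility and dimension statements, it already suffices that one of the constructions is a bijective morphism between these projective schemes, since such a morphism is a homeomorphism.
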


\begin{proof}
For general $p,q$, the line $L=\overline{pq}$ is not contained in $X$, so $L\cap X$ is a length-three scheme. Write
\[
  L\cap X=p+q+r.
\]
The residual point $r$ is general as $(p,q)$ varies: indeed, the incidence of ordered triples of collinear points on $X$ is symmetric in the three points and dominates each factor.

Let $C\subset X$ be a conic through $p$ and $q$, and let $\Pi=\langle C\rangle\simeq \Pj^2$. Since $p,q\in C$, the plane $\Pi$ contains $L$. As $L\not\subset X$ and $L\subset \Pi$, the plane $\Pi$ is not contained in $X$; hence $\Pi\cap X$ is a plane cubic containing $C$, so
\[
  \Pi\cap X=C+\ell
\]
for a residual line $\ell\subset \Pi$. Because $\Pi\cap X\subset X$, the residual line lies on $X$. Intersecting with $L$, the conic $C$ accounts for the points $p$ and $q$, so the residual line accounts for the residual point $r$. Thus, $r\in \ell$.

Conversely, let $\ell\subset X$ be a line through $r$. Since $L$ is not contained in $X$, $\ell\neq L$. Let
\[
  \Pi=\langle L,\ell\rangle.
\]
Then, $\Pi\cap X$ is a plane cubic containing $\ell$, so
\[
  \Pi\cap X=\ell+C
\]
for a residual conic $C$. Since $L\cap X=p+q+r$ and $\ell$ contains $r$, the residual conic contains $p$ and $q$. These two constructions are inverse to one another. Hence,
\[
  F_{2,2}(p,q)\cong \ev_{1,1}^{-1}(r).
\]
By Okamura's line-fiber theorem, the latter is geometrically irreducible of dimension $n-3$ for general $r$.
\end{proof}

\section{Intersections of two quadrics}

Assume $H^n=4$, so
\[
  X=Q_1\cap Q_2\subset \Pj^{n+2}
\]
is a smooth complete intersection of two quadrics.

\begin{proposition}\label{prop:two-quadric-conics}
For general $p,q\in X$, the fiber $F_{2,2}(p,q)$ is isomorphic to a smooth quadric $Q^{n-3}\subset \Pj^{n-2}$. In particular, it is geometrically irreducible of dimension $n-3$.
\end{proposition}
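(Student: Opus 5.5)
The idea is to parametrize conics through $p$ and $q$ by their linear spans, in the spirit of \cref{prop:cubic-conics}. Let $L=\overline{pq}$. For general $p,q$ the line $L$ is not contained in $X$, and it is also not contained in either quadric $Q_i$ of the pencil. By \cref{lem:degree-two-incidence}, $F_{2,2}(p,q)$ is identified with the space of connected degree-two curves $C\subset X$ through $p,q$: integral conics, or chains of two lines joined at a point, with $p$ and $q$ on different lines. In either case the curve spans a plane $\Pi\supset L$. Planes containing $L$ form a $\Pj^{n}$, namely the projective space of $2$-planes in $V:=T/\langle L\rangle\cong k^{n+1}$, where $\Pj^{n+2}=\Pj(T)$. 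So the proposal is to realize $F_{2,2}(p,q)$ as the locus of planes $\Pi\supset L$ such that $\Pi\cap X$ contains a conic through $p,q$.

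The key computation is the following. For each quadric $Q$ in the pencil, restrict $Q$ to a plane $\Pi=\langle L,v\rangle$ and ask when $\Pi\cap Q_1\cap Q_2$ contains a conic. Since $\Pi\not\subset X$ generically, we need a member $Q_\lambda$ of the pencil with $\Pi\subset Q_\lambda$, and then $\Pi\cap X=\Pi\cap Q_\mu$ is a conic for any other member $Q_\mu$. Conversely, if some $Q_\lambda\supset\Pi$, the conic $\Pi\cap Q_\mu$ passes through $p,q$ automatically. The plan is then:
\begin{enumerate}[label=(\arabic*)]
\item Since $L\not\subset Q_1,Q_2$ and $Q_i(p)=Q_i(q)=0$, there is a unique member $Q_0:=Q_{\lambda_0}$ of the pencil containing $L$: the condition $Q_\lambda|_L=0$ is one linear condition in $\lambda$, given by the coefficient of $st$.
\item Any $\Pi\supset L$ lying on some quadric of the pencil must lie on $Q_0$. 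Planes in $Q_0$ containing $L$ correspond to vectors $v$ with $v$ in $L^{\perp_{Q_0}}$ (codimension two, since $p,q$ are independent points and $Q_0$ is smooth at them) and $Q_0(v)=0$ modulo $L$. Thus they are parametrized by the isotropic points of the induced quadratic form on $L^{\perp}/L\cong k^{n-1}$, which is a quadric $Q^{n-3}\subset\Pj^{n-2}$.
\item This gives a morphism $Q^{n-3}\to F_{2,2}(p,q)$, $\Pi\mapsto \Pi\cap Q_\mu$, defined in families. Its inverse $C\mapsto\langle C\rangle$ is also a morphism. The one thing to check is that $\Pi\cap Q_\mu$ is never all of $\Pi$, which holds because $L\not\subset X$. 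So the two spaces are isomorphic.
\end{enumerate}

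The main obstacle is smoothness of the quadric in step (2): I must show that the form induced by $Q_0$ on $L^{\perp}/L$ is nondegenerate for general $p,q$. This holds exactly when $Q_0$ is a smooth quadric, since a nondegenerate form restricted to $L^\perp/L$ with $L$ a nondegenerate hyperbolic plane remains nondegenerate. Here $L$ is a hyperbolic plane for $Q_0$ because $p,q$ are isotropic and $B_0(p,q)\ne0$, as $L\not\subset X$ forces $L\not\subset Q_0$ only if $B_0(p,q)\neq 0$; I would double-check this. The pencil has exactly $n+3$ singular members, and these are cones. To show $Q_0$ is smooth for general $(p,q)$, I would argue by dimension: for a fixed singular member $Q_\lambda$, the pairs $(p,q)$ with $L\subset Q_\lambda$ form a proper closed subset of $X\times X$, since for fixed $p$ the condition $q\in T_pQ_\lambda$ cuts a hyperplane section of $X$. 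Removing these finitely many closed subsets, $Q_0$ is smooth.

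Finally, irreducibility and dimension $n-3$ follow because $n-1\ge 3$, so a smooth quadric in $\Pj^{n-2}$ is irreducible. This gives the proposition.
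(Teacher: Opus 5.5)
Your route is the paper's. You use the unique member $Q_0$ of the pencil containing $L$, the fact that the span of every conic through $p,q$ lies on $Q_0$, the identification of planes $L\subset\Pi\subset Q_0$ with isotropic lines in $L^\perp/L$, and the same dimension count showing $Q_0$ is smooth for general $(p,q)$. But your resolution of what you call the main obstacle is wrong.

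You argue that the form induced on $L^\perp/L$ is nondegenerate because $L$ is a hyperbolic plane for $Q_0$ with $B_0(p,q)\neq 0$. In fact $Q_0$ was chosen so that $Q_0|_L\equiv 0$. Since $Q_0(p)=Q_0(q)=0$, we have $Q_0(sp+tq)=2st\,B_0(p,q)$, so the coefficient of $st$ you kill in step (1) is exactly $2B_0(p,q)$. Hence $B_0(p,q)=0$, and the $2$-dimensional subspace $W$ underlying $L$ is totally isotropic, not hyperbolic. Were it hyperbolic, you would have $W\cap W^\perp=0$. Then the quotient $W^\perp/W$ you use in step (2) would not be defined, and no plane through $L$ could lie on $Q_0$ at all.

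The correct argument uses total isotropy. You have $W\subset W^\perp$, and $\dim W^\perp=n+1$ because $Q_0$ is nondegenerate. The radical of $Q_0|_{W^\perp}$ is $W^\perp\cap (W^\perp)^\perp=W^\perp\cap W=W$. So $Q_0$ induces a nondegenerate form on $W^\perp/W\cong k^{n-1}$, whose isotropic lines form a smooth $Q^{n-3}\subset\Pj^{n-2}$. This is not cosmetic: a degenerate induced form, say of rank two, would give a reducible quadric, and the irreducibility claim would fail.

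Two smaller corrections. First, the reason some member of the pencil must contain $\Pi=\langle C\rangle$ is that $Q_1|_\Pi$ and $Q_2|_\Pi$ both vanish on the plane conic $C$. They are therefore both multiples of its equation, hence linearly dependent. Second, $\Pi\not\subset X$ holds for every plane $\Pi\supset L$, not merely generically, because $L\not\subset X$. With these fixes your proof coincides with the paper's.
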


\begin{proof}
Let $L=\overline{pq}$. The restrictions $Q_1|_L$ and $Q_2|_L$ are binary quadrics vanishing at $p$ and $q$, hence they are proportional. Thus there is a unique member
\[
  Q_0\in \langle Q_1,Q_2\rangle
\]
of the pencil containing $L$. For general $p,q$, this quadric $Q_0$ is smooth: the pencil has only finitely many singular members, and for each fixed member the condition that the chord line $\overline{pq}$ be contained in it is a proper closed condition on $X\times X$.

Let $C\subset X$ be a conic through $p$ and $q$, and let $\Pi=\langle C\rangle$. Then $L\subset \Pi$. Since $C\subset Q_1\cap Q_2$, the restrictions $Q_1|_\Pi$ and $Q_2|_\Pi$ are plane quadrics vanishing on the same conic. Hence they are linearly dependent. Therefore, some member of the pencil $\langle Q_1,Q_2\rangle$ vanishes identically on $\Pi$. Since $\Pi$ contains $L$, that member is $Q_0$. Thus, $\Pi\subset Q_0$.

Conversely, suppose $\Pi$ is a plane with
\[
  L\subset \Pi\subset Q_0.
\]
Let $Q_\infty$ be any member of the pencil independent from $Q_0$. Since $X=Q_0\cap Q_\infty$, the intersection
\[
  \Pi\cap X=\Pi\cap Q_\infty
\]
is a plane conic containing $p$ and $q$, for general $(p,q)$; no plane contained in $X$ contains a general chord $L$. Thus
\[
  F_{2,2}(p,q)\cong \{\Pi: L\subset \Pi\subset Q_0\}.
\]

We now identify this parameter space. Let $E$ be the vector space with $\Pj(E)=\Pj^{n+2}$, and let $W\subset E$ be the two-dimensional totally isotropic subspace corresponding to the line $L\subset Q_0$. Planes $\Pi$ satisfying $L\subset \Pi\subset Q_0$ correspond to three-dimensional totally isotropic subspaces $U\subset E$ with $W\subset U$. Equivalently, they correspond to isotropic lines in the nondegenerate quadratic space
\[
  W^\perp/W.
\]
This vector space has dimension
\[
  (n+3)-2-2=n-1.
\]
Therefore, the parameter space is the smooth quadric of isotropic lines in $\Pj(W^\perp/W)\simeq \Pj^{n-2}$, namely, a smooth $Q^{n-3}$.
\end{proof}

\section{Linear sections of the Grassmannian}

Assume $H^n=5$, so
\[
  X=\Gr(2,V)\cap \Lambda,
\]
where $\dim V=5$ and $\Lambda\simeq \Pj^{n+3}\subset \Pj(\wedge^2V)\simeq \Pj^9$. Schubert varieties in such linear sections of the Grassmannian are studied in \cite{AC12}. Let
\[
  p=[A],\qquad q=[B]
\]
with $A,B\subset V$ two-dimensional subspaces. For general $p,q$, we have $A\cap B=0$. Set
\[
  W:=A\oplus B.
\]
Then, $\dim W=4$, and
\[
  \Gr(2,W)\subset \Gr(2,V)
\]
is a smooth quadric fourfold in $\Pj(\wedge^2W)\simeq \Pj^5$.

\begin{lemma}\label{lem:grass-conics-in-W}
Every degree-two stable map to $\Gr(2,V)$ passing through $[A]$ and $[B]$, with $A\cap B=0$, has image contained in $\Gr(2,W)$.
\end{lemma}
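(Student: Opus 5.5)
The plan is to show that every $2$-plane parametrized by the image of such a map lies in $W=A\oplus B$. I would split into the two cases of the source curve: irreducible, or a chain of two lines (possibly with a double cover or contracted components, handled separately).

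\emph{Reducible case.} Suppose the image is a union of two lines, or a line traversed twice. A line in $\Gr(2,V)$ has the form
\[
\{[U] : K\subset U\subset M\},\qquad \dim K=1,\ \dim M=3.
\]
A single line cannot contain both $[A]$ and $[B]$, since that would force $\dim(A\cap B)\ge 1$. So a double cover is excluded, and the connected image must be two lines $\ell_1\ni[A]$ and $\ell_2\ni[B]$ meeting at a point $[U_0]$. The pair $[A],[U_0]$ lies on $\ell_1$, so $\dim(A\cap U_0)\ge1$; likewise $\dim(B\cap U_0)\ge1$. Since $A\cap B=0$, this gives $U_0=(A\cap U_0)\oplus(B\cap U_0)\subset W$.

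Every point of $\ell_1$ is a $2$-plane $U$ with $K_1\subset U\subset M_1$. Here $K_1=A\cap U_0\subset W$ and $M_1=A+U_0\subset W$, so $\ell_1\subset\Gr(2,W)$. The same argument gives $\ell_2\subset\Gr(2,W)$. Contracted components do not change the image.

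\emph{Irreducible case.} The image is a smooth conic $C\subset\Gr(2,V)$. I would use the standard description of conics on Grassmannians through the tautological subbundle: pulling back gives $S|_{\Pj^1}$ of degree $-2$, so either $S\cong\OO(-1)^2$ or $S\cong\OO\oplus\OO(-2)$.
\begin{enumerate}[label=\textup{(\alph*)}]
\item If $S\cong\OO\oplus\OO(-2)$, then all $2$-planes of $C$ contain a fixed vector. This would force $A\cap B\neq0$, which is impossible.
\item If $S\cong\OO(-1)^2$, then $H^0(S^\vee)$ has dimension $4$, and the span $\sum_{t}U_t\subset V$ is the image of $H^0(S^\vee)^\vee\to V$, of dimension at most $4$. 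This span contains $A+B=W$, which has dimension $4$, so it equals $W$ and every $U_t\subset W$.
\end{enumerate}

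A more elementary alternative avoids the bundle splitting. The linear span $\langle C\rangle$ is a plane $\Pi\subset\Pj(\wedge^2V)$ with $\Pi\cap\Gr(2,V)\supset C$. The chord $\overline{pq}$ spans $a+b$ with $a\in\wedge^2A$ and $b\in\wedge^2B$. Elements of $\Pi$ have rank $\le 4$, and one would show the plane lies in $\wedge^2W$. I would prefer the bundle argument.

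The main obstacle is the irreducible-conic case. Specifically, it is justifying that the span of the planes $U_t$ along a conic has dimension at most $4$, and excluding the $\OO\oplus\OO(-2)$ splitting. Both follow by counting sections of $S^\vee$, which is generated by the restriction of $V^\vee$.
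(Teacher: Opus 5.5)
Your proposal is correct and follows essentially the same route as the paper. For an irreducible source you use the dichotomy $S\cong\OO(-1)^{\oplus 2}$ versus $S\cong\OO\oplus\OO(-2)$: the latter forces a common nonzero vector in $A\cap B$, and the former bounds the span of the fibers by $4=\dim W$. For a chain of two lines you show that the intermediate plane $U_0$ meets both $A$ and $B$ in lines and hence lies in $W$. Your version is only slightly more explicit than the paper's: it justifies the span bound via $h^0(S^\vee)=4$, rules out double covers of a line, and checks directly that each line $\{K\subset U\subset M\}$ lies in $\Gr(2,W)$.
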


\begin{proof}
First, consider an irreducible degree-two map $f:\Pj^1\to \Gr(2,V)$. It corresponds to a rank-two subbundle
\[
  S\subset V\otimes \OO_{\Pj^1}
\]
with $\deg S=-2$. The splitting type of $S$ is either $\OO(-1)\oplus \OO(-1)$ or $\OO\oplus \OO(-2)$. In the second case all two-planes in the family contain a fixed line, so the image cannot pass through two disjoint two-planes $A$ and $B$. In the balanced case $S\simeq \OO(-1)\oplus \OO(-1)$, the subspace of $V$ generated by the fibers of $S$ has dimension at most four. Since two fibers are $A$ and $B$, whose sum has dimension four, that subspace must be $W=A\oplus B$. Thus, the image lies in $\Gr(2,W)$.

For a reducible degree-two map, the image is a chain of two Grassmannian lines. If an intermediate two-plane $C$ lies on a line with $A$ and on a line with $B$, then $C$ meets both $A$ and $B$ in lines. Since $A\cap B=0$, those two lines span $C$, so $C\subset A\oplus B$. Hence, the reducible image also lies in $\Gr(2,W)$.
\end{proof}

\begin{proposition}\label{prop:grass-conics}
For general $p,q\in X$,
\[
  F_{2,2}(p,q)\cong \Pj^{n-3}.
\]
In particular $F_{2,2}(p,q)$ is irreducible of dimension $n-3$.
\end{proposition}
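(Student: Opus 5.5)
By \cref{lem:degree-two-incidence} and \cref{lem:no-line-through-two-points}, $F_{2,2}(p,q)$ (reduced) is the space of connected degree-two curves $C\subset X$ through $p$ and $q$. By \cref{lem:grass-conics-in-W} any such $C$ lies in $\Gr(2,W)\cap\Lambda$. The plan is to identify this locus with an explicit linear section of the quadric fourfold $G:=\Gr(2,W)\subset\Pj(\wedge^2W)\simeq\Pj^5$, and then describe the conics through $p,q$ on it.

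\textbf{Step 1: the section $G\cap\Lambda$.} The space $\Lambda\cap\Pj(\wedge^2W)$ is cut out in $\Pj^5$ by the restrictions of the $6-n$ linear forms defining $\Lambda\subset\Pj^9$ (note $\operatorname{codim}\Lambda=9-(n+3)=6-n$). For general $p,q$ I expect these restrictions to be independent, so that $Z:=G\cap\Lambda$ is a quadric of dimension $n-2$ in $\Pj^{n-1}$. This needs a dimension count: the pairs $(A,B)$ for which the restriction map $\Lambda^{\perp}\to(\wedge^2W)^\vee$ drops rank should form a proper closed subset of $X\times X$. I would verify this by an incidence dimension count over the space of $4$-dimensional subspaces $W\subset V$.

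\textbf{Step 2: conics in $Z$ through $p,q$.} A conic in the quadric $Z\subset\Pj^{n-1}$ through $p,q$ spans a plane $\Pi$ containing $L=\overline{pq}$. Since $A\cap B=0$, the line $L$ is not contained in $G$, so $\Pi\not\subset Z$. Hence $\Pi\cap Z$ is a conic through $p,q$, and it is reducible exactly when it is a chain of two lines, which is allowed. Conversely every plane $\Pi$ with $L\subset\Pi\subset\Pj^{n-1}$ gives such a conic. Therefore
\[
F_{2,2}(p,q)\cong\{\Pi\supset L\}\cong\Pj(k^{n}/W_L)\cong\Pj^{n-3},
\]
where $W_L$ is the $2$-dimensional subspace underlying $L$. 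I still need to check that $\Pi\cap Z$ is never all of $\Pi$ and never a double line. Neither can happen: $\Pi\not\subset Z$ as noted, and a double line would require $p,q$ to lie on a line in $X$, which is excluded by \cref{lem:no-line-through-two-points}.

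\textbf{Step 3: scheme-theoretic identification.} The bijection should be an isomorphism of reduced spaces. The map $\Pi\mapsto\Pi\cap Z$ is algebraic in families, and its inverse $C\mapsto\langle C\rangle$ is algebraic as well. Combined with the identification in \cref{lem:degree-two-incidence}, this gives $F_{2,2}(p,q)\cong\Pj^{n-3}$, which is irreducible of dimension $n-3$.

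The main obstacle is Step 1, the genericity statement that $\Lambda$ meets $\Pj(\wedge^2W)$ transversally for general $(p,q)$. I expect this to reduce to the fact that in a general $\Lambda$ (and in fact any $\Lambda$, since $X$ is smooth) the locus of $W$ where the rank drops has dimension too small to contain the chord-generated $W$'s. To handle it, I would parametrize $W$ by $\Gr(4,5)\simeq\Pj^4$ and note that $(A,B)\mapsto A\oplus B$ dominates $\Gr(4,5)$. The bad $W$ then form a proper closed subset, and it remains to show this subset does not contain the image of $X\times X$.
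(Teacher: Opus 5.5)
Your reduction to $\Gr(2,W)$ and your identification of conics through $p,q$ with planes $L\subset\Pi\subset\Lambda\cap\Pj(\wedge^2W)$ (Steps 2 and 3) match the paper's proof. Your treatment of double lines is, if anything, slightly more careful. The gap is Step 1, which you flag but do not prove. The conclusion $\Pj^{n-3}$ depends entirely on $\dim\Lambda\cap\Pj(\wedge^2W)=n-1$, and your sketch does not deliver it. Its logic is also inverted. If $(A,B)\mapsto A\oplus B$ dominates $\Gr(4,V)$, then no proper closed subset can contain the image, so the part you say ``remains to show'' is automatic. Meanwhile the claim that the bad $W$ form a proper closed subset, which is the actual content, is given no argument.

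The missing idea is a rank argument for skew forms, and it shows that no genericity is needed. Let $K=\Lambda^\perp\subset\wedge^2V^\vee$, so $\dim K=6-n\le 2$. Suppose a nonzero $\omega\in K$ had rank $2$, with $3$-dimensional kernel $N$. The tangent space to $\Gr(2,V)$ at $[U]$ lies in $H_\omega=\{\omega=0\}$ if and only if $U\subset\ker\omega$. So $H_\omega\supset\Lambda$ would be tangent to $\Gr(2,V)$ along the whole plane $\Gr(2,N)\subset H_\omega$. Since $\Lambda$ is cut out inside $H_\omega$ by $5-n\le 1$ further hyperplanes, it meets this plane, and $X$ would be singular at any point of $\Gr(2,N)\cap\Lambda$. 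Hence every nonzero $\omega\in K$ has rank $4$. A rank-$4$ form on the $5$-dimensional $V$ has $3$-dimensional maximal isotropic subspaces, so $\omega|_W\neq 0$ for every $4$-dimensional $W$. Therefore $K\to\wedge^2W^\vee$ is injective for all $W$, and $\Lambda_W\simeq\Pj^{n-1}$. This is exactly how the paper argues.

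Alternatively, your incidence count can be completed. The form $\alpha\wedge\beta$ vanishes on $W$ if and only if $W\supset\ker\alpha\cap\ker\beta$, which is a pencil of hyperplanes, and rank-$4$ forms vanish on no $W$. So the bad $W$ have dimension at most $\dim\Pj(K)+1\le 2<4$. On that route you would still owe a proof that $(A,B)\mapsto A\oplus B$ is dominant.
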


\begin{proof}
By \cref{lem:grass-conics-in-W}, conics through $p,q$ lie in
\[
  Q^4:=\Gr(2,W)\subset \Pj(\wedge^2W)\simeq \Pj^5.
\]
Let $L:=\overline{pq}$. Since $A\cap B=0$, the points $[A]$ and $[B]$ are not joined by a line lying on the Klein quadric $Q^4=\Gr(2,W)$, so $L\not\subset Q^4$; consequently no plane containing $L$ is contained in $Q^4$. Hence, for every plane $\Pi$ with $L\subset \Pi\subset \Pj(\wedge^2W)$, the intersection
\[
  C_\Pi=\Pi\cap Q^4
\]
is a conic through $p,q$, and conversely every conic in $Q^4$ through $p,q$ arises in this way with $\Pi=\langle C_\Pi\rangle$.

The conic $C_\Pi$ lies in $X=\Gr(2,V)\cap \Lambda$ if and only if its span $\Pi$ lies in $\Lambda$. Hence, conics in $X$ through $p,q$ are parametrized by planes
\[
  \Pi \quad \text{with}\quad L\subset \Pi\subset \Lambda\cap \Pj(\wedge^2W).
\]
It remains to compute the dimension of this linear space. Let
\[
  K:=\Lambda^\perp\subset \wedge^2 V^\vee
\]
be the annihilator of $\Lambda$, so $\dim K=6-n$. The projective dual of $\Gr(2,V)$ is the Pfaffian locus of skew forms of rank at most two. Since $X=\Gr(2,V)\cap \Lambda$ is smooth, $\Pj(K)$ avoids this dual variety. Hence every nonzero form in $K$ has rank four. A rank-four skew form on the five-dimensional vector space $V$ has maximal isotropic subspaces of dimension three, so its restriction to the four-dimensional subspace $W$ is nonzero. Therefore the restriction map
\[
  K\longrightarrow \wedge^2 W^\vee
\]
is injective. It follows that
\[
  \Lambda_W:=\Lambda\cap \Pj(\wedge^2W)
\]
has codimension $6-n$ in $\Pj(\wedge^2W)\simeq \Pj^5$, and hence
\[
  \dim \Lambda_W=5-(6-n)=n-1.
\]
Thus, $\Lambda_W\simeq \Pj^{n-1}$. The planes containing a fixed line $L$ inside $\Pj^{n-1}$ are parametrized by $\Pj^{n-3}$. This proves the proposition.
\end{proof}

\begin{proof}[Proof of \cref{thm:conic-base}]
The three descriptions of the conic fiber are \cref{prop:cubic-conics}, \cref{prop:two-quadric-conics}, and \cref{prop:grass-conics}. In each description, the reducible conics form a proper closed discriminant locus in the displayed geometrically irreducible parameter space. Hence the generic conic is integral. The universal conic over $F_{2,2}(p,q)$ therefore has geometrically integral generic fiber over a geometrically irreducible base, and so is geometrically irreducible.
\end{proof}

\section{Two-pointed evaluation fibers}

We now prove \cref{thm:two-pointed-main}. Let $X$ be a Picard-rank-one del Pezzo manifold of dimension $n\geq 4$ with $H^n\in\{3,4,5\}$.

For $d=2$, the theorem follows from \cref{thm:conic-base}. Assume, henceforth, that $d\geq 3$, and set
\[
  e:=d-2.
\]
Let
\[
  M_{d,2}:=\Mbar_{0,2}(X,d)_{\mathrm{red}}.
\]
By \cref{lem:marked-irreducible}, $M_{d,2}$ is integral.

We construct the conic-plus-tail probe over a dense open subset of $X\times X$. Let
\[
  g: \Mbar_{0,3}(X,2)_{\mathrm{red}}\longrightarrow X\times X
\]
be the morphism given by the first two evaluation maps, and let
\[
  \ev_3:\Mbar_{0,3}(X,2)_{\mathrm{red}}\to X
\]
be the third evaluation map. By \cref{lem:marked-irreducible}, the source of $g$ is integral. By \cref{thm:conic-base,lem:degree-two-incidence}, the general fiber of $g$ is the universal conic over a general conic fiber, and is geometrically irreducible. After replacing $X\times X$ by a dense open subset $B^\circ$, we may therefore assume that
\[
  g^{-1}(B^\circ)\to B^\circ
\]
is flat with geometrically irreducible fibers.

By \cref{thm:one-pointed-main} and generic flatness, there is a dense open subset $U_e\subset X$ such that
\[
  M_e^\circ:=\ev_{e,1}^{-1}(U_e)\subset \Mbar_{0,1}(X,e)_{\mathrm{red}}
\]
is flat over $U_e$ with geometrically irreducible fibers. Shrinking $B^\circ$ further, we may assume $B^\circ\subset U_e\times U_e$. Define
\[
  C_2^\circ:=g^{-1}(B^\circ)\cap \ev_3^{-1}(U_e).
\]
Then $C_2^\circ\to B^\circ$ is flat with geometrically irreducible fibers: over a point $(p,q)\in B^\circ$, the fiber is the inverse image of $U_e$ in the universal conic over $F_{2,2}(p,q)$, which is a nonempty open subset because $p,q\in U_e$. In particular, $C_2^\circ$ is integral.

Now set
\[
  T_d:=C_2^\circ\times_{U_e} M_e^\circ,
\]
where the fiber product is taken using the third evaluation map on $C_2^\circ$ and the marked evaluation map on $M_e^\circ$. The variety $T_d$ is integral: the projection $T_d\to C_2^\circ$ is the base change of $M_e^\circ\to U_e$, hence is flat with geometrically irreducible fibers over the integral variety $C_2^\circ$. A point of $T_d$ is a conic with marked points $p,q,x$, with $x\in U_e$, together with a degree $e$ tail through $x$. Gluing at $x$ furnishes a morphism
\[
  \gamma_d:T_d\to M_{d,2}.
\]
There is also a map
\[
  h_d:T_d\to X\times X
\]
remembering the first two marked points, and
\[
  \ev_{d,2}\circ \gamma_d=h_d.
\]

\begin{lemma}\label{lem:Td-generic-fiber}
The morphism
\[
  h_d:T_d\to X\times X
\]
has geometrically irreducible generic fiber.
\end{lemma}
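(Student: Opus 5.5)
The plan is to factor $h_d$ as the composite
\[
  T_d\xrightarrow{\ \mathrm{pr}_1\ }C_2^\circ\xrightarrow{\ g\ }B^\circ\subset X\times X,
\]
and to check geometric irreducibility fiber by fiber, in the same way as in the proof of \cref{lem:theta-fibers}. Both factors have already been arranged to be flat with geometrically irreducible fibers. For $\mathrm{pr}_1$ this holds because it is the base change of $M_e^\circ\to U_e$ along $\ev_3:C_2^\circ\to U_e$, and the fibers of $M_e^\circ\to U_e$ are geometrically irreducible by \cref{thm:one-pointed-main} together with the choice of $U_e$. For $g$ it holds by the construction of $C_2^\circ$, using \cref{thm:conic-base} and \cref{lem:degree-two-incidence}. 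It therefore suffices to prove that, for each geometric point $(p,q)\in B^\circ$, the fiber $h_d^{-1}(p,q)$ is irreducible.

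To do this, fix $(p,q)\in B^\circ$ and set $Z:=g^{-1}(p,q)\cap C_2^\circ$. This $Z$ is irreducible: it is a nonempty open subset of the universal conic over $F_{2,2}(p,q)$, and that universal conic is irreducible by \cref{thm:conic-base}. The fiber $h_d^{-1}(p,q)$ equals $Z\times_{U_e}M_e^\circ$, and the projection
\[
  Z\times_{U_e}M_e^\circ\longrightarrow Z
\]
is flat, in particular open, with irreducible fibers $\ev_{e,1}^{-1}(x)$ for $x\in Z$.

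Next I apply the standard topological lemma: if $\phi:A\to Z$ is open and surjective, $Z$ is irreducible, and all fibers of $\phi$ are irreducible, then $A$ is irreducible. The argument is short. Suppose $A=A_1\cup A_2$ with both $A_i$ closed. For each $z\in Z$, irreducibility of $\phi^{-1}(z)$ puts $\phi^{-1}(z)$ inside $A_1$ or inside $A_2$. Openness of $\phi$ makes the sets $\{z:\phi^{-1}(z)\subset A_i\}$ closed, because their complements $\phi(A\setminus A_i)$ are open. These two closed sets cover $Z$, so by irreducibility one of them is all of $Z$, and then $A=A_i$. Since flat morphisms locally of finite presentation are universally open, the same reasoning goes through after base change to $\overline{k(B^\circ)}$, and this gives irreducibility of the geometric generic fiber.

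I expect the main obstacle to be this passage from closed fibers to the geometric generic fiber. To handle it I will work directly with the generic point $\eta$ of $B^\circ$. The fiber $g^{-1}(\eta)$ is geometrically irreducible by the construction of $B^\circ$, and $\mathrm{pr}_1$ is flat with geometrically irreducible fibers over every point of $C_2^\circ$, not only over closed points. This is because geometric irreducibility of the fibers of $M_e^\circ\to U_e$ over all points of $U_e$ was built into the choice of $U_e$, using openness of the geometrically irreducible locus. Applying the lemma of the previous paragraph over $\overline{k(\eta)}$ then finishes the argument.
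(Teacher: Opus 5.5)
Your proof is correct and follows the paper's argument. Fix $(p,q)\in B^\circ$, identify the fiber with $C_2^\circ(p,q)\times_{U_e}M_e^\circ$, and use that its projection to the irreducible open piece of the universal conic is flat with geometrically irreducible fibers. You also make explicit the open-map irreducibility lemma and the passage to the geometric generic fiber, which the paper leaves implicit; there the closed-fiber-to-generic-fiber step is the equivalence stated in the preliminaries. One small correction: shrinking $U_e$ so that all its fibers, including non-closed ones, are geometrically irreducible is justified by constructibility of that locus, not by openness, which can fail for flat proper families.
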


\begin{proof}
It suffices to work over the dense open subset $B^\circ\subset X\times X$. Fix $(p,q)\in B^\circ$. The fiber of $h_d$ over $(p,q)$ is
\[
  T_d(p,q)=C_2^\circ(p,q)\times_{U_e} M_e^\circ,
\]
where $C_2^\circ(p,q)$ is a nonempty open subset of the universal conic over $F_{2,2}(p,q)$. By the construction of $B^\circ$, the variety $C_2^\circ(p,q)$ is geometrically irreducible. The projection
\[
  T_d(p,q)\to C_2^\circ(p,q)
\]
is the base change of $M_e^\circ\to U_e$, and hence is flat with geometrically irreducible fibers. Therefore $T_d(p,q)$ is geometrically irreducible. Thus $h_d$ has geometrically irreducible generic fiber.
\end{proof}

\begin{proof}[Proof of \cref{thm:two-pointed-main}]
Consider the Stein factorization
\[
  M_{d,2}\to Y_d\to X\times X
\]
of $\ev_{d,2}$. The image of $T_d\to X\times X$ is dense, and $Y_d\to X\times X$ is finite. Since $Y_d$ is irreducible, the induced map
\[
  T_d\to Y_d
\]
is dominant.

By \cref{lem:Td-generic-fiber}, $T_d\to X\times X$ has geometrically irreducible generic fiber. Applying \cref{lem:stein-probe} with $B=X\times X$, $M=M_{d,2}$, and $T=T_d$, we conclude that $\ev_{d,2}$ has geometrically irreducible generic fiber.

Finally, the dimension follows from Okamura's dimension formula for the unpointed space. We have
\[
  \dim \Mbar_{0,0}(X,d)=(n-1)d+n-3,
\]
so
\[
  \dim \Mbar_{0,2}(X,d)=(n-1)d+n-1.
\]
Since $\dim(X\times X)=2n$ and $\ev_{d,2}$ is dominant, the general fiber has dimension
\[
  (n-1)d+n-1-2n=(n-1)(d-2)+n-3.
\]
This completes the proof.
\end{proof}

\section*{Acknowledgments}

The author thanks Professor Joe Harris for his guidance.

\bigskip

\noindent\textsc{Department of Mathematics, Harvard University, 1 Oxford Street, Cambridge, MA 02138, USA}\par
\noindent\textit{Email address:} \texttt{akrishna@college.harvard.edu}

\end{document}